\newcommand{\e}{\varepsilon}
\newtheorem{claim}{Claim}
\newtheorem{theorem}{Theorem}
\newtheorem{proposition}{Proposition}
\newtheorem{lemma}{Lemma}
\newtheorem{corollary}{Corollary}
\newtheorem{definition}{Definition}
\newtheorem{conjecture}{Conjecture}
\newcommand{\er}{{Erd\H{o}s-R\'enyi }}
\newcommand\old[1]{}
\title{Recovery and Rigidity in a Regular Stochastic Block Model}
\author[Brito]{Gerandy Brito}
\address{Department of Mathematics, University of Washington}
\email{gerandy@math.washington.edu}
\author[Dumitriu]{Ioana Dumitriu}
\address{Department of Mathematics, University of Washington}
\email{dumitriu@math.washington.edu}
\author[Ganguly]{Shirshendu Ganguly}
\address{Department of Mathematics, University of Washington}
\email{sganguly@math.washington.edu}
\author[Hoffman]{Christopher Hoffman}
\address{Department of Mathematics, University of Washington}
\email{hoffman@math.washington.edu}
\author[Tran]{Linh V. Tran}
\address{International University, National University Hochiminh City}
\email{tvlinh@hcmiu.edu.vn}
\begin{document}

\maketitle
\begin{abstract}
The stochastic block model is a natural model for studying community detection in random networks.  Its clustering properties have been extensively studied in the statistics, physics and computer science literature.  Recently this area has experienced  major mathematical breakthroughs,  particularly for the binary  (two-community) version, see \cite{MNS12, MNS13, M13}.  
In this paper, we introduce a variant of the binary model which we call the regular stochastic block model (RSBM). We prove rigidity by showing that with high probability an exact recovery of the community structure is possible. Spectral methods exhibit a regime where this can be done efficiently. Moreover we also prove that, in this setting, any suitably good partial recovery can be bootstrapped to obtain a full recovery of the communities.
\end{abstract}

\section{Definition of the model and main results}
\label{definition}
The stochastic block model (SBM) is a classical cluster-exhibiting
random graph model that has been extensively studied, both empirically
and rigorously, across numerous fields. In its simplest form, the SBM
is a model of random graphs on $2n$ nodes with two equal-sized
clusters $\mathcal{A}$ and $\mathcal{B}$ such that
$|\mathcal{A}|=|\mathcal{B}|=n$ and $\mathcal{A} \cap \mathcal{B}
=\emptyset$. Edges between various pairs of vertices appear
independently with probability $p=p_n$ if the two vertices belong to the
same cluster and with probability $q=q_n$ otherwise.
Thus, for any vertex, the expected number of same-class neighbors is
$a:=a_n:=p (n-1) \sim pn$, and the expected number of across-class neighbors is
$b:=b_n:= q n$.

Given a realization of the graph, the broad goal is to determine
whether it is possible (with high probability) to find the partition
$\mathcal{A},\mathcal{B}$; and if the answer is yes, whether it is
possible to do so using an efficient algorithm. Otherwise, the best
one can hope for is the existence of an algorithm that will output a
partition which is highly  (or at least positively) correlated with
the underlying cluster. To this end, consider the space $\mathcal{M}$
of all algorithms which take as input a finite graph on $2n$ vertices and output a partition of the vertex set into two sets. 
Informally, we say that an algorithm in $\mathcal{M}$ allows for {\bf
  weak recovery} if, with probability going to $1$ as $n$ goes to infinity, it outputs a partition $(A',B')$ such that
$|\mathcal{A} \Delta A'|+|\mathcal{B} \Delta B'| =o(n)$  (here $\Delta$ denotes the symmetric difference).
We say that an algorithm allows for {\bf strong recovery} if, with
probability going to $1$ as $n$ goes to infinity, it outputs the
partition $(\A,\B)$. Finally, an algorithm in $\mathcal{M}$ will be called {\bf efficient} if its run time is polynomial in $n$.

The problem of community detection described above is closely related
to the min-bisection problem, where one looks for a partition of the
vertex set of a given graph into two subsets of equal size such that
the number of edges across the subsets is minimal. In general, this problem is
known to be NP-hard \cite{GJS76}; however, if the min-bisection is
smaller than most of the other bisections, the problem is known to be
simpler. This fact was noticed a few decades ago, with the advent of
the study of min-bisection in the context of the SBM. In particular, Dyer and Frieze \cite{dyer1989solution} produced one of the earliest results when they showed that if $p > q$ are fixed as $n \to \infty$ then the min-bisection is the one that separates the two classes, and it can be found in expected $O(n^3)$ time.
Their results were improved by Jerrum and Sorkin
\cite{jerrum1998metropolis} and Condon and Karp
\cite{condon2001algorithms}. Each of these papers were able to find
faster algorithms that worked for sparser  graphs. The latter work was
able to solve the min-bisection problem when the average degrees were
of order $n^{1/2+\epsilon}$. 

Until a few years ago most of the literature on both the min-bisection
problem and community detection in the SBM had focused on the case of
increasing expected degrees (i.e.\ $a, b \to \infty$ as $n \to
\infty$), with the best results at that time showing that if the
smallest average degree is roughly $\log n$, then weak recovery is
possible (e.g., McSherry \cite{mcsherry2001spectral} showed that spectral clustering arguments can work to detect the clusters in this setting).
Recently, the sparse case, i.e. when $a, b=O(1)$ has been the focus of
a lot of interest.  This regime is interesting both from a theoretical
and an applied point of view since a lot of real world networks turn
out to be sparse; for more on this see \cite{DLLM08}. 
 Coja-Oghlan demonstrated a spectral algorithm that finds a bisection which is positively correlated with the true cluster when the average degree is a large constant \cite{coja2010graph}.
Using ideas from statistical physics, Decelle, Krzakala, Moore and Zdeborov\'a gave a precise prediction for the problem of recovering a partition positively correlated with the true partition in the sparse SBM \cite{decelle2011}. The prediction was rigorously confirmed in a series of papers by Mossel, Neeman and Sly \cite{MNS12} \cite{MNS13}, and Massouli{\'e}
  \cite{M13}, where it was shown that this level of recovery  is
  possible iff $(a-b)^2>(a+b)$. More recently, \cite{MNS14} found
  necessary and sufficient conditions for $a$ and $b$ under which
  strong recovery is possible. Before them, Abbe, Bandeira and
  Hall  \cite{abbe2014exact} also characterized strong recovery assuming the edge probabilities to be constant factors of $\frac{\ln(n)}{n}$.

In \cite{MNS12} Mossel, Neeman and Sly proposed two regular versions
of the SBM in a sparse regime, and they conjectured thresholds for the
recovery of a correlated partition for each of the models. They also suggested that spectral methods should help to differentiate between the regular SBM and a random regular graph. 
In this article we study a slightly different version of a regular SBM where in addition to the graph being regular,  the number of neighbors that a vertex has within its own community is also a constant. Formally, we have the following definition.
\begin{definition}\label{model}
For integers $n,d_1$ and $d_2$ denote  by $\mathcal{G}(n,d_1,d_2),$
the random regular graph with vertex set $[2n]$, obtained as follows:
Choose an equipartition (parts have equal sizes) $(\mathcal{A}, \mathcal{B})$ of the vertex set, uniformly from
among the set of such equipartitions. Choose two independent copies of uniform simple $d_1$-regular graphs  with vertex set $\mathcal{A}$, respectively $\mathcal{B}$. Finally, connect the vertices from $\mathcal{A}$ with those from $\mathcal{B}$ by a random $d_2$-bipartite-regular graph chosen uniformly. We refer to this family of measures on graphs as the regular stochastic block model (RSBM).
\end{definition}

The goal of this article is to investigate the similarities and
differences between the RSBM and the classical SBM. For the rest of
the article we assume that  $\min\{d_1,d_2\}\geq 3$.  This assumption
implies that, with high probability, the resulting graph is
connected. This differs from the SBM with bounded average degree,
which has a positive density of isolated vertices, which make strong recovery impossible. 
The  constant degree of all the vertices in the RSBM makes the local
neighborhoods easier to analyze; however, as this model lacks the edge-independence present in the SBM,  some computations become significantly more difficult. 

Throughout the rest of the article we say a sequence of events happen asymptotically almost surely (a.a.s.) if the probabilities of the events go to $1$ along the sequence. The underlying measure will be always clear from context.

Our first result, the next proposition, pertains to the rigidity of
RSBM; it says that the RSBM is asymptotically distinguishable from a uniformly chosen random regular
graph with the same average degree. Below, $||\cdot,\cdot||_{TV}$ denotes the total variation distance between measures.
\begin{proposition}\label{orthogonal}
Let $\mu_n$ be the measure induced by $\mathcal{G}(n,d_1,d_2)$ on the
set $\text{Reg}(2n,d_1+d_2)$ of all $(d_1+d_2)$-regular graphs on $2n$
vertices and let $\mu'_n$ be the uniform measure on the same set
$\text{Reg}(2n,d_1+d_2)$. Then for any positive integers $d_1, d_2
\geq 3$,
$$\lim_{n\to \infty} ||\mu'_{n},\mu_n||_{TV} = 1.$$ 
\end{proposition}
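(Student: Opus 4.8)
\medskip
\noindent\emph{Proof strategy.}
The plan is to distinguish $\mu_n$ from $\mu'_n$ through the joint law of the numbers of short cycles. For a graph $G$ and an integer $k\ge 3$ let $X_k=X_k(G)$ be the number of cycles of length $k$ in $G$, and set $d:=d_1+d_2$. It suffices to produce, for each $\e\in(0,1)$, an integer $K$ and a set $E\subseteq\mathbf{Z}_{\ge 0}^{\,K-2}$ such that, writing $E_n:=\{(X_3,\dots,X_K)\in E\}$, one has $\liminf_n\mu_n(E_n)\ge 1-\e$ and $\limsup_n\mu'_n(E_n)\le\e$; for then $\liminf_n||\mu'_n,\mu_n||_{TV}\ge\liminf_n\big(\mu_n(E_n)-\mu'_n(E_n)\big)\ge 1-2\e$, and $\e$ is arbitrary.

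\smallskip
\noindent\emph{Step 1: Poisson limits for the short-cycle counts.}
For the uniform model this is classical (Bollob\'as, Wormald): under $\mu'_n$ the vector $(X_3,\dots,X_K)$ converges in law, for every fixed $K$, to a vector of independent Poisson variables with means $\lambda'_k=(d-1)^k/(2k)$. I will prove the analogue under $\mu_n$: $(X_3,\dots,X_K)$ converges to independent Poissons with means $\lambda_k=\lambda_k(d_1,d_2)$. The tool is the method of factorial moments in the configuration-model description of $\mathcal G(n,d_1,d_2)$, where each vertex of $\mathcal A$ carries $d_1$ half-edges for the internal graph on $\mathcal A$ and $d_2$ half-edges for the bipartite graph, and symmetrically for $\mathcal B$: for each $k$ one computes the expected number of vertex-disjoint closed non-backtracking walks of length $k$, summed over the patterns of internal and crossing steps. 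This yields $\lambda_k=\tfrac{1}{2k}\,\mathrm{tr}(M^k)$, where $M$ is the $4\times 4$ nonnegative ``type matrix'' on oriented edge-types $\{AA,AB,BA,BB\}$ --- the first letter recording the side of the tail, the second the side of the head --- that records non-backtracking continuations: $M_{AA,AA}=d_1-1$, $M_{AA,AB}=d_2$, $M_{AB,BB}=d_1$, $M_{AB,BA}=d_2-1$, the rows for $BB$ and $BA$ obtained from those for $AA$ and $AB$ by swapping $A$ and $B$, and all other entries $0$. One must also undo the conditioning that the three component graphs be simple; since cycles of length $\ge 3$ are unaffected by that conditioning, this is done in the standard way, by also proving joint Poisson convergence of the loop- and multiple-edge counts in each component and noting that they decouple from $(X_3,\dots,X_K)$ in the limit.

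\smallskip
\noindent\emph{Step 2: the limiting parameters are mutually singular.}
Every row of $M$ sums to $d-1$, so $d-1$ is its Perron eigenvalue; moreover $M$ commutes with the involution $A\leftrightarrow B$, and on the antisymmetric subspace it acts by a $2\times 2$ matrix of trace $d_1-d_2$ and determinant $d_1+d_2-1=d-1$. Hence the eigenvalues of $M$ are $d-1$, $-1$, and the two roots $\theta_\pm$ of $\theta^2-(d_1-d_2)\theta+(d-1)=0$; since $|\theta_+\theta_-|=d-1$ and $|\theta_\pm|<d-1$, the number $\rho:=\max(|\theta_+|,|\theta_-|)$ satisfies $\rho^2\ge d-1\ge 5$. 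Thus $\lambda_k-\lambda'_k=\big((-1)^k+\theta_+^k+\theta_-^k\big)/(2k)$, and because $|\theta_+^k+\theta_-^k|\ge\rho^k$ for a set $S$ of integers of positive density (all $k$ when $\theta_\pm$ are real, all $k$ with $|\cos k\phi|\ge\tfrac12$ when $\theta_\pm=\sqrt{d-1}\,e^{\pm i\phi}$), while $\lambda_k=\Theta\big((d-1)^k/k\big)=\Theta(\lambda'_k)$, we get $\big(\sqrt{\lambda_k}-\sqrt{\lambda'_k}\big)^2=(\lambda_k-\lambda'_k)^2/\big(\sqrt{\lambda_k}+\sqrt{\lambda'_k}\big)^2\ge c/k$ for all large $k\in S$ (using $\rho^2\ge d-1$); hence $\sum_{k\ge 3}\big(\sqrt{\lambda_k}-\sqrt{\lambda'_k}\big)^2=\infty$. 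By Kakutani's dichotomy the infinite products $\nu:=\bigotimes_{k\ge 3}\mathrm{Poisson}(\lambda_k)$ and $\nu':=\bigotimes_{k\ge 3}\mathrm{Poisson}(\lambda'_k)$ are mutually singular; equivalently, writing $\nu_K,\nu'_K$ for their marginals on coordinates $3,\dots,K$, we have $||\nu_K,\nu'_K||_{TV}\to 1$ as $K\to\infty$. (The eigenvalues $\theta_\pm$, of modulus at least $\sqrt{d-1}$, are exactly the extra spectral contribution of the community structure studied in the rest of the paper.)

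\smallskip
\noindent\emph{Step 3, and the main difficulty.}
Given $\e\in(0,1)$, Step 2 lets us pick $K$ with $||\nu_K,\nu'_K||_{TV}>1-\e$ and then a set $E\subseteq\mathbf{Z}_{\ge 0}^{\,K-2}$ with $\nu_K(E)-\nu'_K(E)>1-\e$, so $\nu_K(E)>1-\e$ and $\nu'_K(E)<\e$. Since every subset of the discrete space $\mathbf{Z}_{\ge 0}^{\,K-2}$ is clopen, Step 1 gives $\mu_n(E_n)\to\nu_K(E)$ and $\mu'_n(E_n)\to\nu'_K(E)$, which is what the strategy above requires. The only substantial point is Step 1 under $\mu_n$: unlike the classical SBM, the RSBM has no edge-independence and is assembled from three coupled uniform random regular graphs, so the factorial-moment estimates for the joint law of the cycle counts --- and of the cycle counts together with the small-multigraph counts used to pass to simple graphs --- must be established directly for this construction rather than quoted.
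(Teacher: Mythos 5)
Your strategy is correct in outline, but it is a genuinely different route from the paper's. The paper's proof is a short support-counting argument: with $K_n$ the support of $\mu_n$, it computes $\mu'_n(K_n)=|K_n|/|\mathcal{G}(2n,d_1+d_2)|$ using McKay--Wormald-type enumeration asymptotics for regular and bipartite regular graphs, and Stirling's formula shows this ratio decays exponentially in $n$; since $\mu_n(K_n)=1$, the bound $||\mu'_n,\mu_n||_{TV}\ge \mu_n(K_n)-\mu'_n(K_n)$ finishes the proof in half a page. You instead distinguish the two measures by the joint law of short-cycle counts. Your computations check out: the non-backtracking type matrix $M$ has row sums $d-1$ and spectrum $\{d-1,\,-1,\,\theta_+,\,\theta_-\}$ with $\theta_\pm$ the roots of $\theta^2-(d_1-d_2)\theta+(d-1)=0$, and the resulting means $\lambda_k=\frac{1}{2k}\mathrm{tr}(M^k)$ agree with direct counts (e.g.\ triangles); the identity $\theta_+\theta_-=d-1$ forces $\max(|\theta_+|,|\theta_-|)^2\ge d-1$, so $(\sqrt{\lambda_k}-\sqrt{\lambda'_k})^2\ge c/k$ along a positive-density set of $k$, the Kakutani/Hellinger sum diverges, and the portmanteau step on the discrete space is fine. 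What your route buys is an explicit, efficiently computable distinguishing statistic and a conceptual explanation of why the RSBM is distinguishable for \emph{all} $d_1,d_2\ge 3$ (the regularity constraint pins the extra eigenvalues at modulus at least $\sqrt{d-1}$, putting the Hellinger sum exactly at the harmonically divergent borderline), nicely parallel to the spectral picture used elsewhere in the paper. What it costs is that everything hinges on your Step 1 --- joint Poisson convergence of $(X_3,\dots,X_K)$ under $\mu_n$, together with the de-conditioning on simplicity of the three component graphs --- which you correctly identify as the substantial point but only sketch; it is true and provable by standard factorial-moment arguments for coupled configuration models (in the spirit of contiguity results for superpositions of regular graphs), but writing it out is a nontrivial piece of work with no counterpart in the paper's elementary counting proof, so as it stands your text is a sound program rather than a complete proof.
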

This result sharply contrasts the RSBM and the SBM (which is
indistinguishable from an \er random graph with the same size and
average degrees satisfying $(a-b)^2\leq (a+b)$ \cite{MNS12}).

In order to determine whether it is possible to recover the partition
in the RSBM, one must first answer a basic question about the random
graph $\mathcal{G}(n,d_1,d_2)$: is the `true partition' $(\A, \B)$ 
identifiable. I.e., is $(\A, \B)$ the only way to partition the graph such that the subgraphs on the parts are $d_1$-regular (which then implies that the subgraph across is $d_2$-bipartite)? The following result shows that the answer is yes if $d_1$ and $d_2$ are sufficiently large.  

\begin{theorem}\label{uniqueness}  
There exists a constant $d'>0$ such that, for $d_1>d_2>d'$, $\mathcal{G}(n,d_1,d_2)$ has a unique partition a.a.s.
\end{theorem}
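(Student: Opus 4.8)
The plan is a first--moment estimate over ``competing'' partitions, carried out in the configuration model. First I would record a combinatorial description: any equipartition $\{\mathcal A',\mathcal B'\}\neq\{\mathcal A,\mathcal B\}$ can be written, after choosing which part to call $\mathcal A'$, as $\mathcal A'=(\mathcal A\setminus X)\cup Y$ and $\mathcal B'=(\mathcal B\setminus Y)\cup X$ with $X\subseteq\mathcal A$, $Y\subseteq\mathcal B$, $|X|=|Y|=k$, and $1\le k\le n/2$ (the value $k=n$ returns $\{\mathcal A,\mathcal B\}$, and $k>n/2$ reduces to $n-k$ by relabelling the two parts). Writing $G_1,G_1'$ for the internal $d_1$--regular graphs on $\mathcal A,\mathcal B$ and $H$ for the bipartite $d_2$--regular graph, a neighbour count shows that both parts of $\{\mathcal A',\mathcal B'\}$ are $d_1$--regular if and only if
\[
   |N_{G_1}(v)\cap X|\;=\;(d_1-d_2)\,\mathbf{1}_{\{v\in X\}}+|N_H(v)\cap Y|\qquad(v\in\mathcal A),
\]
and the symmetric identity holds on $\mathcal B$; equivalently $\mathbf{1}_{\mathcal A'}-\mathbf{1}_{\mathcal B'}$ is an eigenvector of the adjacency matrix, with eigenvalue $d_1-d_2$, having all entries in $\{\pm1\}$ (though I would not pursue a purely spectral argument, since $d_1-d_2$ need not be an isolated eigenvalue). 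Summing the displayed identity over $v\in X$, and its analogue over $\mathcal B$, yields $2e(X)-e_H(X,Y)=k(d_1-d_2)=2e(Y)-e_H(X,Y)$, so $e(X)=e(Y)\ge k(d_1-d_2)/2$; in particular $k\ge d_1-d_2+1$, and every vertex of $X$ (resp.\ $Y$) has at least $d_1-d_2$ neighbours inside $X$ in $G_1$ (resp.\ inside $Y$ in $G_1'$).

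Since $G_1,G_1',H$ are independent, each simple with probability bounded below, and carry no common edges, it suffices to prove $\sum_k\sum_{|X|=|Y|=k}\Pr[(X,Y)\ \text{admissible}]=o(1)$ in the pairing model. Fixing $X,Y$ and conditioning on $H$ pins the target profile $r(v)=(d_1-d_2)\mathbf{1}_{\{v\in X\}}+|N_H(v)\cap Y|$ on $\mathcal A$ (so $0\le r\le d_1$ and $\sum_v r(v)=kd_1$) and the analogous $r'$ on $\mathcal B$, and the admissibility event factors into the two independent events ``$G_1$ has neighbourhood profile $r$ on $X$'' and ``$G_1'$ has profile $r'$ on $Y$''. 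A direct pairing count gives, with $p=\tfrac12\sum_{v\in X}r(v)=e(X)$, $q=kd_1-2p$ and $s=\tfrac12(nd_1-kd_1-q)$,
\[
   \Pr_{G_1}\!\left[\text{profile}=r\right]=\Bigl(\prod_{v\in\mathcal A}\binom{d_1}{r(v)}\Bigr)\frac{(2p-1)!!\,(2s-1)!!\,q!}{(nd_1-1)!!}.
\]
Maximising over admissible $r$, multiplying by the symmetric $G_1'$ factor, and applying Stirling gives $\Pr[(X,Y)\ \text{admissible}]\le\exp\!\bigl(n\,\Phi(k/n)+o(n)\bigr)$ for an explicit rate $\Phi=\Phi_{d_1,d_2}$. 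Together with $\binom nk^2\le\exp\!\bigl(2nH(k/n)+o(n)\bigr)$ ($H$ the binary entropy) this reduces the theorem to the inequality $2H(\alpha)+\Phi(\alpha)<0$ for all $\alpha\in(0,\tfrac12]$, for $d_1>d_2>d'$.

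Two regimes are easy. For $\alpha$ small the displayed identity forces each of the $kd_2$ edges of $H$ incident to $Y$ to land in the closed neighbourhood $N_{G_1}[X]$, of size $\le k(d_1+1)$, whence $\Pr[(X,Y)\ \text{admissible}]\le\bigl(k(d_1+1)/n\bigr)^{kd_2}$; since $d_2\ge3$ the corresponding part of the sum is $o(1)$ for all $k$ up to a (small, $d$--dependent) constant fraction of $n$, and a direct use of the pairing formula extends this to $\alpha$ up to order $1/d_1$. For $\alpha$ bounded away from $0$, the pairing formula and Stirling give $\Phi(\alpha)\le-\log\!\bigl(c\,d_1\alpha(1-\alpha)\bigr)$ for an absolute $c>0$ (morally: the two sides of each of the $\Theta(n)$ conditions behave like independent binomials with distinct means, each matched with conditional probability $<1$), and this beats $2H(\alpha)\le2\log2$ once $d'$ is large.

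The one genuinely delicate point, which I expect to be the real work, is the \emph{uniform} verification of $2H(\alpha)+\Phi(\alpha)<0$ across the whole range $\alpha\in(0,\tfrac12]$ --- in particular near the crossover $\alpha$ of order $1/d_1$, where neither the ``concentrated'' nor the ``local limit'' estimate for the profile count is by itself sharp enough and one must combine the pairing formula with the constraint $e(X)=e(Y)\ge k(d_1-d_2)/2$ (itself a costly upper--tail event for vertex subsets of random regular graphs) and the cross--conditions. It is exactly the demand that this balance close with room to spare that forces $d'$ large; the configuration--model bookkeeping and the two extreme regimes are routine by comparison.
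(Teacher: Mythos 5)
Your overall framework --- a first--moment bound over swap sets $X\subseteq\mathcal A$, $Y\subseteq\mathcal B$ in the configuration model, with the probability of a competing partition controlled by the probability that a regular factor hits a prescribed degree profile --- is sound and close in spirit to the paper's proof (which also proceeds by union bound over $A\subset\mathcal A$, $B\subset\mathcal B$, reduces to a near--constant worst--case profile, and uses local--limit/binomial estimates for the large--$\alpha$ range). Your exact pairing formula and the structural consequences ($e(X)=e(Y)\ge k(d_1-d_2)/2$, each $v\in X$ having at least $d_1-d_2$ neighbours in $X$) are correct. But the argument as written has a genuine gap exactly where you flag it: the intermediate range of $\alpha=k/n$. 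Your trapping bound $\Pr\le\bigl(Ckd_1/n\bigr)^{kd_2}$ beats $\binom nk^2$ only up to $\alpha\lesssim d_1^{-d_2/(d_2-2)}$, while the local--limit bound $\Phi(\alpha)\le-\log\bigl(c\,d_1\alpha(1-\alpha)\bigr)$ is only available (and only useful) once $d_1\alpha$ exceeds a large constant; in the window in between, the per--vertex point probability of $\mathrm{Bin}(d_1,\alpha)$ is $\Theta(1)$ and gives no decay, and your proposed rescue --- charging the upper--tail cost of $e(X)\ge k(d_1-d_2)/2$ --- fails when $d_1-d_2$ is small (the theorem allows $d_2=d_1-1$, where this constraint costs only about $(Cd_1\alpha)^{k/2}$ against entropy $(e/\alpha)^{2k}$). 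Moreover, with your conditioning direction (fix $H$, randomize $G_1,G_1'$) the natural number of forced coincidences is only $\sim kd_2$, so even an optimistic accounting needs $d_2\gtrsim\log d_1$, which is not implied by $d_1>d_2>d'$.

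The missing ingredient, which is how the paper bridges precisely this regime, is an a.a.s.\ \emph{expansion} input for the random $d_1$-regular graph on $\mathcal A$: Friedman's spectral gap plus the Cheeger--type bound (Theorem~\ref{min_cut}) give $|\partial_{\mathcal A}X|\ge\frac{d_1}{4}|X|$ for all $|X|\le n/2$. Conditioning on the graph on $\mathcal A$ (the opposite direction from yours) and randomizing $H$, the paper first shows (Lemma~\ref{constant1}) that the profile probability is maximized at a near--constant profile $g_v\in\{\ell,\ell+1\}$, and then (Lemma~\ref{domination}) that the event costs a factor $\P\bigl(\mathrm{Bin}(d_2,2\alpha)\ge1\bigr)\le\min(2d_2\alpha,c)<1$ for each of at least $\min\bigl(\tfrac{d_1}{8}\alpha n,\,n/2\bigr)$ vertices --- the factor $d_1$ in the exponent supplied by expansion (or the cap $n/2$ when $\ell\ge1$) is what crushes the entropy $2H(\alpha)$ uniformly for all $\alpha\le C/d_2$ once $d'$ is large, after which the local--limit estimate (Lemmas~\ref{supbound}--\ref{totalvar1}) takes over for $\alpha\ge C/d_2$. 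Without this expansion--driven boost of the constraint count from $\Theta(kd_2)$ to $\Theta(kd_1)\wedge n/2$ (or an equivalent device), your two easy regimes do not meet, so the proposal does not yet prove the theorem.
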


The particular value of $d'$ that we get is far from optimal; we
conjecture that the conclusion of this theorem should be true for
$d'=2$. The proof of Theorem \ref{uniqueness} is quite technical and is given in section \ref{ortho}.
 
To our knowledge, this is the first uniqueness of partition result for
block models with constant degrees. Such a result is not true,
however, in the classical setting where the edges are independent, since with constant probability one has isolated vertices.

If the original partition is unique in most cases then one can, in principle, find the original partition by
exhaustive search, and hence achieve strong recovery. This is again in
sharp contrast with the SBM, where strong recovery is achievable only
in the case of growing degrees. 

The next natural direction is to look for an efficient algorithm for strong recovery. 
While we do not answer this question in general, we do exhibit one regime where such an algorithm exists. 

\begin{theorem}\label{str12}
Assume $(d_1-d_2)^2> 4(d_1+d_2-1)$ and $d_1$ is even. Then there is an efficient algorithm that allows strong recovery.
\end{theorem}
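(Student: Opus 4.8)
\smallskip

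The plan is to give a purely spectral algorithm and to underpin it with a second--eigenvalue theorem for the model. Let $A$ be the adjacency matrix of $\mathcal{G}(n,d_1,d_2)$ and let $\sigma\in\{+1,-1\}^{2n}$ be the signed class indicator ($\sigma_v=+1$ on $\mathcal{A}$, $\sigma_v=-1$ on $\mathcal{B}$). Since every vertex has \emph{exactly} $d_1$ neighbours inside its own part and \emph{exactly} $d_2$ across, one checks at once that
\[
A\mathbf{1}=(d_1+d_2)\,\mathbf{1},\qquad A\sigma=(d_1-d_2)\,\sigma .
\]
Thus, unlike in the usual SBM, there is no noisy ``signal vector'' to recover: $\sigma$ is an \emph{exact} eigenvector, and the only question is whether the eigenvalue $d_1-d_2$ is simple and extreme. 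The algorithm: compute the spectrum of $A$; on the a.a.s.\ event that the graph is connected the top eigenvalue $d_1+d_2$ is simple with eigenvector $\mathbf{1}$, which we discard; among the rest pick an eigenvalue $\lambda^\star$ of largest absolute value, with unit eigenvector $v^\star$; output the bipartition of $[2n]$ read off from the sign pattern of $v^\star$. This runs in polynomial time, and as written it is insensitive to the sign of $d_1-d_2$.

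\smallskip

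The substance is the Friedman/Ramanujan--type claim that, a.a.s., every eigenvalue of $A$ other than $d_1+d_2$ and one copy of $d_1-d_2$ has absolute value at most $2\sqrt{d_1+d_2-1}+o(1)$. Granting this, the hypothesis $(d_1-d_2)^2>4(d_1+d_2-1)$ is precisely $|d_1-d_2|>2\sqrt{d_1+d_2-1}$, so $d_1-d_2$ is a simple eigenvalue and the unique one of largest modulus apart from $d_1+d_2$; hence $v^\star=\pm\,\sigma/\sqrt{2n}$ and the algorithm outputs exactly $(\mathcal{A},\mathcal{B})$. (En passant, this reproves uniqueness of the planted partition in this range of parameters.)

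\smallskip

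So everything reduces to a spectral gap for the \emph{combined} model, and here decoupling the two edge types already loses the sharp constant: bounding $A=A_{\mathrm{in}}+A_{\mathrm{out}}$ on $\{\mathbf{1},\sigma\}^{\perp}$ by the triangle inequality gives only $2\sqrt{d_1-1}+2\sqrt{d_2-1}+o(1)$ (Friedman on each $d_1$--regular block and the bipartite Friedman bound for the $d_2$--biregular part), and since $\big(2\sqrt{d_1-1}+2\sqrt{d_2-1}\big)^2=4(d_1+d_2-1)+8\sqrt{(d_1-1)(d_2-1)}-4>4(d_1+d_2-1)$ for all $d_1,d_2\ge3$, this never meets the hypothesis. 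One must treat $\mathcal{G}(n,d_1,d_2)$ as a single random graph whose local weak limit is the $2$--coloured $(d_1+d_2)$--regular tree ($d_1$ same--colour and $d_2$ opposite--colour descendants at each node). I would follow the non--backtracking route of Bordenave's proof of Friedman's theorem: present each of the three ingredients by its configuration model conditioned on simplicity so that combinatorial estimates transfer up to contiguity corrections, show the graph is tangle--free at scale $c\log n$ a.a.s., and bound the expected number of pairs of tangle--free non--backtracking closed walks of length $\ell\asymp\log n$ — the deflated trace $\mathbb{E}\,\mathrm{tr}\big(B^{\ell}(B^{\ell})^{*}\big)$ for the non--backtracking matrix $B$ — obtaining spectral radius $\sqrt{d_1+d_2-1}+o(1)$ off the two structured directions. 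The Ihara--Bass relation for a $(d_1+d_2)$--regular graph ($\mu^{2}-\lambda\mu+(d_1+d_2-1)=0$ links eigenvalues $\lambda$ of $A$ and $\mu$ of $B$, with the structured $\mu$'s matching $\lambda=d_1\pm d_2$) then converts this into the displayed bound on the spectrum of $A$.

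\smallskip

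The main obstacle is exactly this last trace estimate, in a model with no edge independence and two interacting half--edge types: one counts non--backtracking tangle--free closed walks while bookkeeping how many steps use an ``in'' versus an ``out'' half--edge, controls the contribution of walks with excess edges and of coincidences straddling the two colour classes, and checks that the leading term is the tree term of order $(d_1+d_2-1)^{\ell}$ rather than something of order $(\sqrt{d_1-1}+\sqrt{d_2-1})^{2\ell}$. I do not anticipate any conceptual novelty beyond transplanting the one--type argument to the two--type setting, but securing the constant $\sqrt{d_1+d_2-1}$ — which is what makes the stated hypothesis the natural one for a spectral method — is where essentially all of the technical work lies.
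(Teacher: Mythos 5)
Your reduction is sound and, in fact, coincides with the strategy the paper uses when $d_1$ is even: $e$ and $\sigma$ are exact eigenvectors of $A$ with eigenvalues $d_1+d_2$ and $d_1-d_2$, and once one knows that a.a.s.\ every other eigenvalue has modulus at most $2\sqrt{d_1+d_2-1}+o(1)$, the hypothesis $(d_1-d_2)^2>4(d_1+d_2-1)$ makes $d_1-d_2$ simple and separated, so the sign pattern of the second eigenvector recovers $(\mathcal{A},\mathcal{B})$ exactly. The problem is that your proof of that spectral estimate is a plan, not an argument. You defer it to ``following the non-backtracking route of Bordenave'' --- tangle-freeness, a trace bound for $\mathbb{E}\,\mathrm{tr}(B^{\ell}(B^{\ell})^{*})$ in a two-half-edge-type configuration model with dependent edges, then Ihara--Bass --- and you acknowledge that essentially all the technical work lies there. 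No result you invoke (and none cited in the paper) supplies the constant $2\sqrt{d_1+d_2-1}$ for this model for general $d_1,d_2\ge 3$; adapting Bordenave's argument to the two-type setting is a substantial project of the same order as the paper's entire appendix, and it is precisely the step the authors chose not to carry out. So as written, the proposal has a genuine gap at its only hard step.

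It is instructive to see how the paper gets around this. For $d_1$ even, your eigenvalue bound comes for free: $\mathcal{G}(n,d_1,d_2)$ is contiguous to a random $n$-lift of the two-vertex multigraph with $d_1/2$ loops at each vertex and $d_2$ parallel edges, so Theorem \ref{spectralnorm} (Friedman--Kohler for lifts) bounds all ``new'' eigenvalues by $2\sqrt{d_1+d_2-1}+\varepsilon$, while the ``old'' ones are exactly $d_1\pm d_2$ with eigenvectors $e$ and $\sigma$; your algorithm is then identical to theirs. That reduction is unavailable for odd $d_1$ (no half-integral number of loops), and in that case the paper never proves your Ramanujan-type bound for $A$: instead it works with the self-avoiding-walk matrix $S^{(l)}$, $l=c\log n$, establishing only the much cruder bulk bound $n^{\epsilon}(d_1+d_2)^{l/2}$, which suffices because $\alpha>\sqrt{d_1+d_2}$ under the hypothesis. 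This yields only asymptotic alignment of the second eigenvector with $\sigma$, i.e.\ weak recovery (Lemma \ref{weakpart1}), and strong recovery is then obtained by a non-spectral bootstrapping step, the iterated majority rule of Theorem \ref{majority}, applicable since the hypothesis forces $d_1-d_2\ge 5$ when $d_1,d_2\ge 3$. So to close your argument you should either restrict to even $d_1$ and quote the lift theorem, or genuinely execute the two-type non-backtracking trace estimate; otherwise the weaker SAW bound combined with majority boosting is the route that completes the proof with the tools actually available.
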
 

 The reason for the above asumption on the parity of $d_1$ is that in this case our graph can be viewed as a ``random lifts", allowing us to exploit their spectral properties, see Section \ref{sop2}. 

Nonetheless, we believe spectral arguments can be used to prove weak recovery, with no restriction on the parity of $d_1$, and record the following conjecture.  
\begin{conjecture}\label{weakpart1}
Assume  $(d_1-d_2)^2>4(d_1+d_2-1)$. Then there is an efficient algorithm that allows weak recovery.
\end{conjecture}

Having obtained weak recovery, one can then achieve strong recovery by recursively applying the majority algorithm where one simultaneously updates the label of each vertex by the majority label among the neighbors. That this can be done is again an example of the rigidity in this model, and highlights one of the main differences between RSBM and the classic SBM. It shows that for the former, existence of an efficient algorithm for weak recovery implies the existence of an algorithm for strong recovery. This contrasts with the separate thresholds in the SBM \cite{MNS14}.

We present the majority algorithm in the section below.

\subsubsection{Majority algorithm.}\label{ma1}
Recall that $\mathcal{A}$ and $\mathcal{B}$ are the true
communities. Let $(A,B)$ be any partition (not necessarily an
equipartition) of the vertex set. For each $i \in [2n]$, let $\sigma_i=+1$ if $i\in A$ and $\sigma_i=-1$ if $i\in B$. 
\begin{itemize}
 \item [] $\mathbf{Initialize}$ $A_0=A, B_0=B$.
 \item [] \textbf{For} $i\in [2n]$  (\text{majority rule})\\
									           $\hat \sigma_i=\text{sign}(\displaystyle{\sum_{v_j\sim v_i}\sigma_j})$
 \item[]	\textbf{Return} $A_1=\{v_i: \hat \sigma_i=+1\}$, $B_1=\{v_i: \hat \sigma_i=-1\}$		
\end{itemize}

Similar applications of the majority algorithm appear in \cite{abbe2014exact} and \cite{MNS14}. There, the authors find criteria for both weak recovery and strong recovery in the SBM. It is not hard to see that weak
recovery and strong recovery are not equivalent  in the sparse SBM,
since the presence of isolated vertices prevents strong recovery. 

We will refer to the majority algorithm as \textbf{Majority}.
The following theorem yields strong recovery from weak recovery. 

 \begin{theorem}\label{majority} Assume $d_1>d_2+4$.  Then there
   exists an $\e=\e(d_1)>0$  such that the following is true a.a.s.:
   given a graph $\mathcal{G}(n,d_1,d_2)$ and any partition $(A,B)$
   of its vertex set such that $|A\cap \mathcal{A}|>(1-\e) n$ and
   $|B\cap \mathcal{B}|>(1-\e) n$, \textbf{Majority} recovers the true
   partition $(\mathcal{A}, \mathcal{B})$ if started with $(A,B)$,
   after $O(\log(n))$ iterations. The constant in the $O(\cdot)$ depends on $\e, d_1.$
\end{theorem}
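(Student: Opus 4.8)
The plan is to show that one step of \textbf{Majority} contracts the number of misclassified vertices by a constant factor (as long as this number is not too small), and then that once the number of errors is small enough, a single further step eliminates all of them; iterating $O(\log n)$ times then drives the error to zero. Set $S = (A \triangle \mathcal{A}) \cup (B \triangle \mathcal{B})$, the set of misclassified vertices, with $|S| = \e' n \le \e n$. A vertex $v$ gets the wrong new label only if among its $d_1+d_2$ neighbors, at least $(d_1-d_2)/2$ of them lie in $S$ (since in the noiseless case $v$ has a clean majority of $d_1 - d_2 > 4$ in favor of its true label, coming from $d_1$ same-community minus $d_2$ cross-community neighbors). So I would bound the number of vertices with many neighbors in $S$. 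This is where the graph structure enters.

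The key step is a combinatorial/expansion estimate on $\mathcal{G}(n,d_1,d_2)$: a.a.s., for \emph{every} set $S$ with $|S| \le \e n$, the number of vertices having at least $(d_1-d_2)/2$ neighbors in $S$ is at most $\gamma |S|$ for some $\gamma = \gamma(d_1,d_2) < 1$. I would prove this by a first-moment/union bound over the choice of $S$ and over the set $T$ of "bad" vertices: if $|S| = s$ and $|T| = t = \gamma s$, the number of edges between $S$ and $T$ is at least $\frac{d_1-d_2}{2} t$, and I would estimate the probability that a uniformly random $(d_1,d_2)$-configuration puts that many edges between two fixed sets of these sizes. Using the configuration/permutation model for the two internal $d_1$-regular graphs and the bipartite $d_2$-regular graph, the probability of seeing $\ge k$ edges between a set of size $s$ and a set of size $t$ decays like $\binom{d_\ast s}{k}\binom{d_\ast t}{k} k! \big/ (d_\ast n)^{\underline{k}}$-type ratios, which is roughly $\big( C s t / n\big)^k$; combined with the $\binom{2n}{s}\binom{2n}{t}$ choices of the sets, one needs $\frac{d_1-d_2}{2} \gamma > $ (roughly) $1 + \log(1/\e')$-type entropy terms, and here $d_1 - d_2 > 4$ is exactly what makes the exponent work once $\e$ is chosen small enough. (For $d_1$ even one may instead invoke the random-lift structure as in Remark \ref{parity}, but the configuration-model bound works in general and is cleanest.) The a.a.s.\ statement is then uniform over all small $S$, so it can be reapplied along the whole trajectory of the algorithm.

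Given the expansion estimate, the iteration is immediate: if $(A,B)$ has error set $S_0$ with $|S_0| \le \e n$, then after one step the error set $S_1$ satisfies $S_1 \subseteq \{v : v \text{ has } \ge (d_1-d_2)/2 \text{ neighbors in } S_0\}$, so $|S_1| \le \gamma |S_0|$; inductively $|S_k| \le \gamma^k |S_0| \le \gamma^k \e n$, all of which stay below $\e n$ so the estimate keeps applying. After $k = \lceil \log_{1/\gamma}(\e n) \rceil + 1 = O(\log n)$ steps we get $|S_k| < 1$, i.e.\ $S_k = \emptyset$ and the true partition has been recovered. One should also check the base step / edge cases: that $S$ could be slightly larger than $\e n$ at intermediate stages is not an issue since it only decreases, and that the "noiseless majority" margin is genuinely $d_1 - d_2$ and strictly positive uses $d_1 > d_2$ together with $d_1$-regularity inside communities and $d_2$-regularity across.

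The main obstacle is the uniform-over-all-small-$S$ expansion bound in the regular model: unlike the \er SBM there is no edge independence, so the probabilities must be computed in the configuration/permutation model and then transferred to the uniform simple-graph model (using that simplicity holds with probability bounded away from zero, hence conditioning inflates probabilities by only a constant). Getting the entropy bookkeeping to close — i.e.\ verifying that the exponential-in-$s$ gain from the edge count beats the exponential-in-$s$ cost of the $\binom{2n}{s}\binom{2n}{\gamma s}$ union bound, for a suitable choice of $\gamma<1$ and sufficiently small $\e$, precisely when $d_1 > d_2 + 4$ — is the delicate part and is where the hypothesis is used. Everything after that is the short deterministic contraction argument above.
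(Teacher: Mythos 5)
Your proposal takes essentially the same route as the paper: the paper's Lemma \ref{maj_lem} is exactly your contraction step --- a vertex mislabeled after one round of \textbf{Majority} must have at least three wrongly-labeled same-community neighbors (this is where $d_1>d_2+4$ enters), and a union bound over the error set and the set of such bad vertices, with configuration-model/exploration-process estimates dominating the relevant degree counts by i.i.d.\ binomials, shows that a.a.s.\ one round shrinks the error by a factor $\lambda<1$ uniformly over all inputs with error at most $\e n$. The paper then iterates this $O(\log n)$ times exactly as you do, and your deviations (threshold $(d_1-d_2)/2$ counted over all misclassified neighbors rather than three same-community ones, and a direct edge-count estimate in place of the binomial stochastic domination) are cosmetic variations of the same argument.
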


The way we iterate the \textbf{Majority} algorithm will be clear from the proof of Theorem \ref{majority}, see section \ref{proof_majority}.

\section{Main ideas and organization of the paper}\label{psk}

In this section we sketch the main ideas behind the proofs and also the structure of the paper.

\subsection{Organization}

There are five results we present in this paper. In section \ref{ortho}, we prove
Proposition \ref{orthogonal} and Theorem \ref{uniqueness}. This section also contains a review of some standard definitions in the random graph literature that we make use of throughout the paper. We present an informal sketch of the proof of Theorem \ref{uniqueness} in section \ref{uni}, introduce some useful notions on random lifts and multigraphs in section \ref{sop2}, where we explain how to obtain Theorem \ref{str12}. Section \ref{sec_weak}  is concerned with proving Theorem \ref{str12}, while section \ref{algorithm} contains the proofs of Theorem \ref{majority}.


\subsection{Sketch of the proof of Theorem \ref{uniqueness}.} \label{uni}
Recall from Definition \ref{model}, in the graph $G:=\mathcal{G}(n,d_1,d_2)$ on $[2n],$ $(\A,\B)$ form the true partition.

Let us introduce the following notation: for any $V\subset [2n]$ let $G_V$ denote the subgraph induced by $G$ on $V$. For disjoint subsets $V_1,V_2,$ let $G_{(V_1,V_2)}$ denote the subgraph on $V_1 \cup V_2$ induced by the edges in $G$ with one endpoint in $V_1$ and the other in $V_2.$
For any $v\in [2n]$ and $V \subset [2n]$ let $deg_V(v)$ denote the number of edges incident on $v$ whose other endpoint is in $V$.

Thus Theorem \ref{uniqueness} says that, a.a.s., there does not exist any $V\subset [2n]$ with $V\neq \A,\B$  and $|V|=n$ such that the following two conditions hold simultaneously:

\begin{itemize}
\item Both $G_{V}$ and $G_{[2n]\setminus V}$ are $d_1$-regular graphs.
\item $G_{(V,[2n]\setminus V)}$ is a $d_2$-regular bipartite graph.
\end{itemize}
 
However we show that it is even unlikely that  $G_{V}$ is
$d_1$-regular for any $V \neq \A,\B$ with $|V|=n$. 
To this end we fix such a $V$ and  let $V_1:=V\cap \A$, $V_2:=V \cap \B$, and assume $|V_2|=\alpha n$ with $\alpha \le \frac{1}{2}.$
Note that, given $G$, $V$ and $\A$, the degree sequence
$\{deg_{V_1}(v)\}_{v \in V_1}$ is determined; if $G_V$ were
$d_1$-regular graph then for each $v \in V$, $$deg_{V_1}(v)+deg_{V_2}(v)=d_1,$$ 
and hence the degree sequence $\{deg_{V_2}(v)\}_{v \in V_1}$ is also determined, i.e. the number of edges going from each vertex in $V_1$ to $V_2$ is fixed. 

It can be shown using the configuration model (see Section \ref{pm1}
for the definition) that the joint distribution of 
 $\{deg_{V_2}(v)\}_{v \in V_1}$ behaves like i.i.d.\
 $Bin(d_2,\alpha)$'s. The proof now follows by using the above to
 estimate the probability of a certain degree sequence from this
 distribution, and by a union bound over all possible choices of $V.$
We remark that the formal proof involves some case analysis depending on the size of $|V_2|$ and relies on the expansion properties of regular graphs when $|V_2|$ is small.  

\subsection{Sketch of the proof of Theorem \ref{str12}.} \label{sop2}

To prove Theorem \ref{str12}, we make use of the recent work on the spectra of random lifts of graphs in \cite{FK14,B15} and the references therein. For a wonderful exposition of lifts of graphs see \cite{Linial}. We now introduce the notion of lift of a multigraph.

\subsubsection{Random lifts and multigraphs}\label{rlmg}

By a multigraph we simply mean a graph that allows for multiple edges and loops. Next we define the notion of lift. Informally, an $n$-lift of a
multigraph $X=(V,E)$ is a multigraph $X_n=(V_n,E_n),$ such
that for each vertex in $V$ there are $n$ vertices in $V_n$ and
locally both graphs look the ``same''. Formally, let $V_n:=V \times
\{1,2,\ldots n\}$. To define the edge set
in the lift consider the set $S^{E}_n:=\{\pi_{e}\}_{e\in E}$ where
$\pi_{e}\in S_n$ (the set of permutations of $[n]$). We have:

 $$E_n:=\{((x,i),(y,\pi_{e}(i))): e=(x,y)\in E,\,\,\, 1\le i\le n\},$$
for $\pi\in S_{n}^{E}.$
Thus every edge in $E$ ``lifts'' to a matching in $E_n.$  For every $v \in V,$ let $v\times \{1,2,\ldots n\}$ be called the \emph{fiber} of $v.$

A random lift is the lift constructed from $\pi \in S_n^{E}$ where
$\{\pi_{e}\}_{e\in E}$ are chosen uniformly and independently from $S_n.$
Let $A$ and $A_n$ be the adjacency matrices of the multigraphs $X$ and $X_n$, respectively. One can check that all the eigenvalues of $A$ are also eigenvalues of $A_n$ and the corresponding eigenvectors can be ``lifted'' as well to an eigenvector (which is constant on fibers) of the lifted graph.
Let the remaining eigenvalues of $A_n$ be,
\begin{equation}\label{remain1}
|\mu_1| \ge |\mu_2|\ge \ldots \ge |\mu_{r}|,
\end{equation}
where $r=n|V|-|V|.$
With the above definitions we now state one of the main results in \cite{FK14}.
\begin{theorem}\label{spectralnorm} Let $d\ge 3$ be an integer and let $X$ be a finite, $d$-regular multigraph. If $X_n$ is a random $n$-lift of $X$ then, for any $\e>0,$ $$\lim_{n\to \infty}\P(|\mu_1|\ge 2\sqrt{d-1}+\e)= 0~.$$
\end{theorem}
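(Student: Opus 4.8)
The plan is to establish Theorem~\ref{spectralnorm} by the trace (moment) method applied to powers of $A_n$, exploiting the independence and uniformity of the lifting permutations $\{\pi_e\}_{e\in E}$. Since the eigenvalues of $A_n$ consist of those of $A$ together with $\mu_1,\dots,\mu_r$, for every positive integer $k$ one has $\sum_{i=1}^{r}\mu_i^{2k}=\mathrm{Tr}(A_n^{2k})-\mathrm{Tr}(A^{2k})$, and since $A_n$ is symmetric each $\mu_i^{2k}\ge 0$, so Markov's inequality gives
\begin{equation*}
\P\!\left(|\mu_1|\ge 2\sqrt{d-1}+\e\right)\le \bigl(2\sqrt{d-1}+\e\bigr)^{-2k}\Bigl(\mathbb{E}\bigl[\mathrm{Tr}(A_n^{2k})\bigr]-\mathrm{Tr}(A^{2k})\Bigr).
\end{equation*}
It then suffices to prove a bound of the shape $\mathbb{E}[\mathrm{Tr}(A_n^{2k})]-\mathrm{Tr}(A^{2k})\le n\cdot k^{C}\cdot(2\sqrt{d-1})^{2k}$ valid for $k$ as large as a small constant times $\log n$; taking $k=k_n\asymp\log n$ then makes the right-hand side above decay polynomially in $n$, because the spectral gap $\e$ contributes a factor $\bigl(2\sqrt{d-1}/(2\sqrt{d-1}+\e)\bigr)^{2k_n}\le n^{-c(\e)}$ with $c(\e)>0$, which beats the leading factor $n$.

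To obtain such an estimate, expand $\mathrm{Tr}(A_n^{2k})$ as a sum over closed walks of length $2k$ in $X_n$. Each such walk projects to a closed walk of length $2k$ in the finite base multigraph $X$; grouping walks by their projection and by the isomorphism type of the sub-multigraph of $X$ traversed, one evaluates the expectation over the independent uniform permutations. The key dichotomy is: if the traversed edges fold up (inside $X$) to a \emph{tree} -- equivalently, the walk is perfectly backtracking after folding -- then all the permutations cancel in pairs, every choice of starting fibre index survives, and the total contribution of these walks reproduces the moments of the spectral measure at a vertex of the infinite $d$-regular tree, i.e.\ the Kesten measure on $[-2\sqrt{d-1},2\sqrt{d-1}]$; via Catalan asymptotics this is a main term of size $n\cdot(2\sqrt{d-1})^{2k}\cdot\mathrm{poly}(k)$, while the subtraction of $\mathrm{Tr}(A^{2k})$ exactly removes the ``trivial'' part carried by the eigenvectors that are constant on fibres. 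If instead the traversed edges contain a cycle (positive first Betti number $\beta$), the permutation expectation acquires a gain of order $n^{-\beta}$ relative to the tree-like case, so after summing over such walks -- whose number grows only polynomially faster -- their contribution is smaller by a factor $n^{-1}$ or better.

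The single serious obstacle is the control of \emph{tangles}: closed walks that spend a long stretch inside a small subgraph of $X_n$ carrying two or more independent cycles. For these the crude ``$n^{-\beta}$-gain versus polynomial walk-count'' bookkeeping breaks down, and a naive moment computation yields only $2\sqrt{d-1}+\Omega(1)$ rather than the sharp constant -- this is precisely the depth of the theorem. I would handle it along the non-backtracking route of \cite{B15}: pass from $A_n$ to the non-backtracking operator $B_n$ of $X_n$ via the Ihara--Bass correspondence, so that it suffices to bound the new eigenvalues of $B_n$ by $\sqrt{d-1}+\e'$; observe that at scale $\ell\asymp\log n$ the set of vertices of $X_n$ whose $\ell$-neighbourhood fails to be a tree has size $o(n)$ with high probability; and run the trace method for $B_n$ after excising the (few) walks that enter this tangled set, using a centered/telescoped version of the walk expansion to keep the remaining error terms under control. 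An alternative is Friedman's original ``selective trace'' analysis in \cite{FK14}, which performs the same tangle removal directly for $A_n$. In either implementation, once the tangled contribution is shown to be negligible, combining the tree-like main term with the suppressed cyclic terms yields the desired bound, and the displayed Markov estimate with $k=k_n\asymp\log n$ finishes the proof. I expect the tangle estimate, not the (essentially combinatorial) tree-walk count, to be where effectively all of the difficulty lies.
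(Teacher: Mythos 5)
First, a point of comparison: the paper does not prove Theorem \ref{spectralnorm} at all. It is imported as a black box from Friedman and Kohler \cite{FK14} (with Bordenave \cite{B15} in the same breath) and then applied to the lift realization of $\mathcal{G}(n,d_1,d_2)$; so there is no in-paper argument to measure your write-up against, only the cited proofs. Your sketch does reproduce the correct architecture of those proofs: the reduction via $\mathbb{E}[\mathrm{Tr}(A_n^{2k})]-\mathrm{Tr}(A^{2k})=\mathbb{E}\bigl[\sum_{i}\mu_i^{2k}\bigr]$ and Markov with $k\asymp\log n$, the classification of closed walks by their projection to $X$, the Kesten-type main term of size $n\,\mathrm{poly}(k)(2\sqrt{d-1})^{2k}$ from walks whose traversed edge set is a tree, the $n^{-\beta}$ suppression of projections with first Betti number $\beta\ge 1$, and the identification of tangles as the obstruction to the sharp constant.

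The genuine gap is that the one step carrying the entire content of the theorem is not carried out but delegated: you write that you ``would handle'' the tangled walks via the non-backtracking/Ihara--Bass route of \cite{B15} or Friedman's selective-trace analysis in \cite{FK14}, and you yourself note that this is where effectively all of the difficulty lies. Without that step the bookkeeping you do spell out only yields a bound of the form $2\sqrt{d-1}+\Omega(1)$ (Broder--Shamir type), not $2\sqrt{d-1}+\varepsilon$; executing the tangle removal (the weak Ramanujan property of the centered non-backtracking matrix at scale $\ell\asymp\log n$, or the selective trace) is a paper-length argument, not a paragraph. There is also a small inaccuracy worth flagging: subtracting $\mathrm{Tr}(A^{2k})$ removes the old-eigenvalue contribution exactly, but it does not ``exactly remove'' the tree-like walk count; the cancellation between the fibre-constant spectrum and the folded-tree main term is only asymptotic, and the correct statement is simply that the new-eigenvalue moment sum is what remains after the subtraction. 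For the purposes of this paper, citing \cite{FK14} or \cite{B15}, as the authors do, is the right move; presented as a proof, your text is an accurate roadmap with its central step outsourced.
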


Recall the definition of strong and weak recovery from Section \ref{definition}. We also  need the following definition. 
\begin{definition}\label{notlabel} Let $e:=e_{2n}$ be the vector of all ones of length $2n$. 
Also let $\sigma=\sigma_{2n}$ be the vector of signs which denotes the partition $\A,\B$ i.e. 
$$\sigma(x)=\left\{\begin{array}{cc}
+1 & x \in \A,\\
-1 & otherwise.
\end{array}\right.
$$
\end{definition}

The proof of Theorem \ref{str12} follows by first realizing the
graph $\cG(n,d_1,d_2)$ as a random lift and then using the above
theorem to show spectral separation of $A_n$; moreover, it can be
shown that, with high probability, $\sigma$ in Definition \ref{notlabel} is an eigenvector associated to the second eigenvalue of the lift. The proof of Theorem \ref{str12} is now reduced to finding a good approximation to the unitary eigenvector corresponding to the second eigenvalue. Note that this allows the strong recovery of the partition $(\A,\B)$.

\section{Proof of Proposition \ref{orthogonal} and Theorem \ref{uniqueness}.} \label{ortho}
Let $K_{n}$ be the support of $\mu_n$, i.e., $K_n$ is the set of all graphs which
are $d_1$-regular on $\mathcal{A}$ and $\mathcal{B}$ and $d_2$-regular
and bipartite across, for some equipartition $(\A, \B)$ of $[2n]$.
Let $|\mathcal{G}(n,d)|$  be the number of $d$-regular graphs on $n$ labelled vertices and let $|\mathcal{BG}(n,d)|$ be the number of $d$-regular bipartite graphs on $2n$ vertices. 
To show that $\mu'_n(K_n)\to 0$ we will use the following enumeration results that can be deduced from \cite{MWa03} and \cite{mckay1991asymptotic}. The idea is to count the number of points in the support of the measures $\mu_n$ and $\mu'_n$.
\noindent
We have from \cite[Corollary 5.3]{mckay1991asymptotic} :
\begin{equation}\label{regulargraph}
|\mathcal{G}(n,d)|=C \frac{(nd)!}{(nd/2)!2^{nd/2}(d!)^n}~,
\end{equation}
asymptotically in $n$, where $C=C(n,d)$ remains bounded as $n$ grows. Similarly, from \cite[Theorem 2]{MWa03}:
\begin{equation}\label{biregulargraph}
|\mathcal{BG}(n,d)|=C_1 \frac{(dn)!}{(d!)^{2n}}~,
\end{equation}
asymptotically in $n$, for $C_1=C_1(n,d)$ a bounded function. We have:
\[
\mu'_n(K_n)=\frac{|K_n|}{|\mathcal{G}(2n,d_1+d_2)|}
\]
To compute $|K_n|$, recall Definition \ref{model}, first choose $\mathcal{A}$ and then use \eqref{regulargraph} and \eqref{biregulargraph}. We get: 

\[
\mu'_n(K_n)=C_2{{2n}\choose{n}}\left(\frac{(nd_1)!}{(nd_1/2)!2^{nd_1/2}(d_1)!^n}\right)^2\frac{(nd_2)!}{(d_2)!^{2n}}
\]
\[
\times\frac{(n(d_1+d_2))!2^{n(d_1+d_2)}(d_1+d_2)!^{2n}}{(2n(d_1+d_2))!} 
\]
for $C_2=C_2(n,d_1,d_2)$ bounded as $n$ grows. Using Stirling's Formula we get:
\[
\mu'_n(K_n)= C_3\left(\frac{4{{d_1+d_2}\choose{d_1}}^2d_1^{d_1}d_2^{d_2}}{2^{d_1+d_2}(d_1+d_2)^{d_1+d_2}}\right)^n
\]
\[
= C_3\left(\frac{2{{d_1+d_2}\choose{d_1}}}{2^{d_1+d_2}}\right)^n\left(\frac{2{{d_1+d_2}\choose{d_1}}d_1^{d_1}d_2^{d_2}}{(d_1+d_2)^{d_1+d_2}}\right)^n
\]
Where $C_3$ equals $C_2$ times a universal constant. Both fractions on the right hand side above are less than $1$. This proves Proposition \ref{orthogonal}.
\qed

\subsection{Uniqueness of the clusters}\label{proofT1}

\subsection{Preliminaries}\label{pm1}
For the sake of completeness, we include in this section some of the basic definitions in the random graph literature. Specifically, we define the configuration model to sample random graphs and also the exploration process.
\subsubsection{Configuration model and exploration process}\label{ep1}
The configuration model, introduced by Bender and Canfield
\cite{bendercanfield} and made famous by Bollobas \cite{bollobas}, is
a well known model to study random regular graphs. Assuming that $dn$
is even, the configuration model outputs a $d$-regular  multigraph
with $n$ vertices. This is done by considering an array $\{\xi_{ij},
1\leq i\leq d$,~ $1\leq j\leq n\}$ and choosing a perfect matching of
it, uniformly among all possible matchings. A graph on $n$ vertices is
obtained by collapsing all $\xi_{ij}$ for $1\leq i\leq d$ into a
single vertex, and putting and edge between two vertices $j$ and $t$
for each pair $(\xi_{ij},\xi_{kl})$ present in the matching. We refer to the family $\xi_{ij}$ as \emph{half edges}. 

It is not hard to see that under the condition that the resulting graph is simple, the distribution of the graph is uniform in the set of all simple $d$-regular graphs. Furthermore, it is well known that, for any fixed $d$, as $n$ grows to infinity, the probability that a graph obtained by the configuration model is simple is bounded away from zero. More precisely, denoting by $G$ the resulting graph, one has (see \cite{bollobas}),
\[
\mathbb{P}(G\mbox{ is simple})=(1-o(1))e^{\frac{1-d^2}{4}}.
\]

Thus, to prove a.a.s. statements for the uniform measure on simple $d$-regular graphs it suffices to prove them for the measure induced on multigraphs by the configuration model.

One extremely useful property of this model is the fact that one can construct the graph by exposing the vertices one at a time, each time matching one by one the $d$ half edges of the correspondent vertex, to a uniformly chosen half edge among the set of unmatched half edges. This process will be used crucially in many of the estimates. We include the precise definition for completeness.  
\begin{definition}\label{conf_process}
Consider the following procedure to generate a random $d$-regular graph on $n$ vertices:
 \begin{itemize}
   \item[$-$] Fix an order of the vertices: $v_1< v_2<...< v_n$ and let $\Xi=\{\xi_{ij}\}, 1\leq i\leq d$ and $1\leq j\leq n$, be the set of half edges, where, for any $1\leq j\leq n$, $\xi_{ij}$ are the $d$ half edges incident to vertex $v_j$. Consider the usual lexicographic order on $\Xi$. 
 	 \item[$-$] Construct a perfect matching of $\Xi$ as follows:
           the first pair is $(\xi_{11},\hat{\xi})$ where
           $\hat{\xi}$ is chosen uniformly from $\Xi \setminus
           \{\xi_{11}\}$. Having constructed $k$ pairs, let
           $\xi_{ij}$ be the smallest half edge not matched yet, chose
           $\tilde{\xi}$ uniformly from the set of remaining unmatched
           half edges different from $\xi_{ij}$, and add the edge
           $(\xi_{ij},\tilde{\xi})$.
	 \item[$-$] Output a multigraph $G$, with vertex set $\{v_j\}$ and an edge set induced by the matching constructed in the previous step.
 \end{itemize}
\end{definition}

This construction outputs a graph with the same law as the one given by the configuration model. Conveniently, with this construction we discover all neighbors of vertex $v_1$ first, then we move to $v_2$ and expose its neighbors (it could be the case that some edges are connecting $v_1$ and $v_2$ and those were exposed before!) and so on.  We will refer to this procedure as the exploration process. 
All the above definitions can be easily adapted to  sample bipartite
regular graphs as well, and in this paper we will use both sets of definitions. 

\subsubsection{Proof of Theorem \ref{uniqueness}}\label{pou}

Recall that $d_1>d_2$ and that ($\mathcal{A}$,$\mathcal{B}$) are the true clusters. The idea, as discussed in Section \ref{psk}, will be to show that, conditioned on the choices of $\mathcal{A}$ and $\mathcal{B}$, if we choose another subset of $n$ vertices, the probability of having a $d_1$-regular graph on these $n$ vertices is small. The estimate on the above probability is crucial since it will then allow us to take a union bound over all possible subsets of size $n$ to conclude that, a.a.s., there is a unique pair of clusters.

First we need some definitions.
\begin{definition}\label{bdrydef}Given a graph $G=(V,E),$
\begin{itemize}
\item [i.]
For a vertex $v$ and a set of vertices $S$ denote by $deg_S(v)$ the number of neighbors of $v$ in $S$. 
\item [ii.]
For any subsets $V_1\subset V_2 \subset V$ define the boundary $\partial_{V_2}V_1$ to be the number of edges in $E$ whose one end point lies in $V_1$ and the other in $V_2\setminus V_1.$ 
When $V_2=V$  we use the simpler notation $\partial V_1.$
\end{itemize}
\end{definition}

Consider non-empty subsets $A\subset \mathcal{A}$, $B\subset \mathcal{B}$ such that $|A\cup B|=n$. Without loss of generality assume $|A|\ge|B|$ and let $\alpha$ be such that 
\begin{equation}\label{frac1}
\alpha n=|B|.
\end{equation}
 We will prove Theorem \ref{uniqueness} by showing that given the $d_1$-regular graph with vertex set $\mathcal{A}$, for any choice of $A$ and $B$ the probability that $A\cup B$ is a $d_1$-regular graph goes to zero as $n$ goes to infinity. We use the simple observation that since $\mathcal{A}$ is $d_1$-regular, to have $A\cup B$ $d_1$-regular, for any vertex $v \in A$, the number of neighbors of $v$ in $B$ must be equal to the number of neighbors of $v$ in $\mathcal{A}\setminus A.$ The technical core of the proof involves showing that the probability of this event is small.
 
 We start by proving a lemma. Recall that, in order to have a $d_1$-regular graph with vertex set $A\cup B$ with $A\subset \mathcal{A}$ and  $B\subset \mathcal{B}$ it is necessary that $deg_B(v)=deg_{\mathcal{A}\setminus A}(v)$ for all $v\in A$. For notational brevity let 
\begin{equation}\label{impnotation1} 
 g_v:=deg_{\A\setminus A}(v)
\end{equation} 
  for all $v\in A$. 
\begin{lemma} \label{constant1} Given  $A\subset \mathcal{A},B \subset
  \mathcal{B}$ and a sequence of non-negative numbers $g=(g_1,g_2,
  \ldots, g_{|A|})$ let $$p(g_1,g_2,\ldots,
  g_{|A|}):=\mathbb{P}(deg_B(v)=g_v~ \mbox{for all}~ v\in A).$$ Then,
  for any such $g$, $$\max_{g'} p(g'_1,g'_2,\ldots, g'_{|A|})=
  p(g^*_1,g^*_2,\ldots, g^*_{|A|}),$$ where $g^*_i\in\{\ell,\ell+1\}$
  for some non negative number $\ell=\ell(g)$. The maximum in the
  above is taken over all sequences $g'=(g'_1,g'_2,\ldots, g'_{|A|})$ such that $\sum_{i=1}^{|A|}g_i'=\sum_{i=1}^{|A|} g_i.$
\end{lemma}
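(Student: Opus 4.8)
The plan is to compute $p(g)$ exactly and then reduce the optimization to a rearrangement inequality for binomial coefficients. The key point is that, for a fixed value of the total $s:=\sum_{v\in A}g_v$, the probability $p(g)$ factors as a quantity depending only on $s$ times $\prod_{v\in A}\binom{d_2}{g_v}$; maximizing the latter over sequences with a prescribed sum is then a standard consequence of the log-concavity of $t\mapsto\binom{d_2}{t}$, which forces the optimum to be as balanced as possible.

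\emph{Step 1: an exact formula for $p(g)$.} Since the $d_1$-regular graph on $\mathcal A$ is independent of the $d_2$-bipartite-regular graph across, and the quantities $deg_B(v)$, $v\in A$, depend only on the latter, I would generate the across-edges by the bipartite configuration model of Section \ref{pm1}: a uniformly random perfect matching between the $d_2 n$ half-edges at $\mathcal A$ and the $d_2 n$ half-edges at $\mathcal B$. Restricting this matching to the $d_2|A|$ half-edges incident to $A$ yields a uniformly random injection $\phi$ of them into the $d_2 n$ half-edges at $\mathcal B$ (any fixed partial injection extends to the same number $(d_2 n-d_2|A|)!$ of full matchings). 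Writing $H_B$ for the set of $d_2|B|$ half-edges incident to $B$, $deg_B(v)$ equals the number of half-edges $x$ of $v$ with $\phi(x)\in H_B$. Counting the injections that realize a prescribed profile $(g_v)_{v\in A}$ — choose, for each $v$, which $g_v$ of its $d_2$ half-edges are sent into $H_B$, then inject those $s=\sum_v g_v$ half-edges into $H_B$ and the remaining $d_2|A|-s$ into the complement of $H_B$ — and dividing by the total number of injections gives
\[
p(g)=\left(\prod_{v\in A}\binom{d_2}{g_v}\right)\cdot c(s),\qquad c(s):=\frac{(d_2|B|)!\,(d_2 n-d_2|B|)!\,(d_2 n-d_2|A|)!}{(d_2|B|-s)!\,(d_2 n-d_2|B|-d_2|A|+s)!\,(d_2 n)!},
\]
so that $c(s)$ depends on $g$ only through $s$. (If $s$ lies outside the range for which the factorials above are defined and positive, then $p\equiv0$ on all sequences with that sum and the lemma is trivial; so assume $0\le s\le d_2|A|$.)

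\emph{Step 2: the rearrangement inequality.} By Step 1, maximizing $p$ over sequences $g'$ with $\sum_v g'_v=s$ is the same as maximizing $F(g'):=\prod_{v\in A}\binom{d_2}{g'_v}$. I would use that $\binom{d_2}{t+1}/\binom{d_2}{t}=(d_2-t)/(t+1)$ is strictly decreasing in $t$, which gives, whenever $g'_i\ge g'_j+2$ (and all entries lie in $\{0,\ldots,d_2\}$, the only case where $F>0$),
\[
\binom{d_2}{g'_i}\binom{d_2}{g'_j}\ \le\ \binom{d_2}{g'_i-1}\binom{d_2}{g'_j+1}.
\]
Hence the balancing move $(g'_i,g'_j)\mapsto(g'_i-1,g'_j+1)$ never decreases $F$, keeps all entries in $\{0,\ldots,d_2\}$, and strictly decreases $\sum_v(g'_v)^2$, so it can be applied only finitely often; when it can no longer be applied, all entries lie in $\{\ell,\ell+1\}$ with $\ell=\lfloor s/|A|\rfloor$. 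Starting from any maximizer of $F$ (equivalently of $p$, since $p=c(s)F$ with $c(s)>0$ fixed) and applying such moves therefore produces a balanced maximizer $g^*$ with $g^*_i\in\{\ell,\ell+1\}$, which is exactly the assertion of the lemma.

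The only point requiring care is Step 1: one must check that restricting the uniform bipartite matching to $A$'s half-edges genuinely gives a \emph{uniform} injection (so the clean product formula is exact) and that conditioning on the graph internal to $\mathcal A$ plays no role. Once this is granted, the rest is bookkeeping together with the textbook log-concavity of the binomial coefficients.
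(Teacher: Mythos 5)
Your proof is correct, and it is a close cousin of the paper's argument rather than an identical one: both rest on configuration-model counting of half-edge matchings and on the log-concavity inequality $\binom{d_2}{g_i}\binom{d_2}{g_j}\le\binom{d_2}{g_i-1}\binom{d_2}{g_j+1}$ for $g_i\ge g_j+2$. The difference is in how the problem is reduced to that inequality. You derive the global closed-form factorization $p(g)=c(s)\prod_{v\in A}\binom{d_2}{g_v}$, with $c(s)$ depending only on $s=\sum_v g_v$, by observing that the restriction of the uniform bipartite matching to the half-edges at $A$ is a uniform injection into the half-edges at $\mathcal{B}$ (your extension count $(d_2n-d_2|A|)!$ justifies this, and independence of the across-graph from the graph internal to $\mathcal{A}$ disposes of the conditioning issue); the balancing/rearrangement step then finishes it, with the sum-of-squares potential guaranteeing termination. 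The paper instead avoids writing the full formula: it uses the exploration process and exchangeability to reduce to the first two exposed vertices, notes that conditionally on $\mathcal{F}_2$ the law of the remaining degrees is the same under $(g_1,g_2)$ and $(g_1+1,g_2-1)$ because the number of unmatched half-edges in $B$ and $\mathcal{B}$ is unchanged, and then compares only the two-vertex marginal, which is exactly your formula specialized to two coordinates. What your route buys is that the statement ``only the total $s$ matters apart from the product of binomials'' is made explicit and exact, so the optimization becomes a textbook rearrangement fact with no conditional-distribution argument needed; what the paper's route buys is that one never has to verify the uniform-injection claim for the whole set $A$, only a two-coordinate computation plus a symmetry observation. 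Both proofs operate under the configuration-model measure (multigraphs), which is the convention the paper uses throughout, so that shared caveat is not a gap in your write-up.
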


The above lemma says that, given the total number of edges going from
$A$ to $B$, the probability of a possible degree sequence is maximized
when all the degrees are essentially the same. Clearly
$l=\left\lfloor{\frac{\sum_{i=1}^{|A|}g_{i}}{|A|}}\right\rfloor$; the
number of $(l+1)$ degrees occurring in $g^*=(g^*_1,g^*_2,\ldots,
g^*_{|A|})$ is determined by $\sum_{i}g^*_i=\sum_{i}g_i.$

\begin{proof} To compute $p(g_1,g_2,\ldots, g_{|A|})$ we use the exploration process for the $d_2-$regular bipartite graph $(\mathcal{A},\mathcal{B})$  where the vertices of $\mathcal{A}$ are exposed  one by one, as sketched in Subsection \ref{ep1}. We order the vertices so that the vertices of $A$ are exposed first. Let $\mathcal{F}_i$ be the filtration generated by the process up to the $i^{th}$ vertex.
Using the exchangeability of the variables $deg_{B}(v_i)$, given a
sequence $\{g_i\}$, w.l.o.g. we can assume $g_1=\min g_i$ and
$g_2=\max g_i$. 

Assume now $g_2-g_1>1$. We will show that $p(g_1,g_2,\ldots,
g_{|A|})<p(g_1+1,g_2-1,\ldots, g_{|A|})$, which implies the lemma. 
We start with the following simple observation:
\begin{align*}
\mathbb{P}(deg_B(v_i)=g_i,~i\geq3 ~ \big | ~\mathcal{F}_2, & deg_B(v_1)=g_1,~deg_B(v_2)=g_2) =\\ 
& \mathbb{P}(deg_B(v_i)=g_i,~i\geq3 ~ \big | ~\mathcal{F}_2, deg_B(v_1)=g_1+1,deg_B(v_2)=g_2-1).
\end{align*}
This is because under the above two conditionings the number of
remaining unmatched half edges in $A,\mathcal{A},B,\mathcal{B}$ is the same. 
Hence it suffices to show that 
\begin{align}\label{bayes}
\mathbb{P}(deg_B(v_1)=g_1, deg_B(v_2)=g_2)< \mathbb{P}(deg_B(v_1)=g_1+1, deg_B(v_2)=g_2-1).
\end{align}

Next we note that
\[
\mathbb{P}(deg_B(v_1)=g_1, deg_B(v_2)=g_2)~=~\binom{d_2}{g_1}\binom{d_2}{g_2}\frac{(\alpha nd_2)_{[g_1+g_2]}((1-\alpha)nd_2)_{[2d_2-g_1-g_2]}}{(nd_2)_{[2d_2]}},
\]
where $(x)_m$ is the falling factorial
$(x)_{[m]}=x(x-1)\ldots(x-m+1)$. To see the above, we first choose
those half edges of $v_1$ and $v_2$ that will connect to half edges in
$B$. Then we choose the $2d_2$ half edges in $\mathcal{B}$ that will
match with the corresponding half edges of $v_1$ and $v_2$ such that
exactly $g_1+g_2$ are incident on vertices in $B$. 

Substituting now into \eqref{bayes} we have:
\begin{align*}
p(g_1,g_2,...g_{|A|})< p(g_1+1,g_2-1,...g_{|A|}) &\Longleftrightarrow
\binom{d_2}{g_1}\binom{d_2}{g_2}< \binom{d_2}{g_1+1}\binom{d_2}{g_2-1}\\
& \Longleftrightarrow
(g_1+1)(d_2-g_2+1)< g_2(d_2-g_1)\\
& \Longleftrightarrow g_1-g_2+1<d_2(g_2-g_1-1)~,
\end{align*}
which follows immediately from $g_2>g_1+1$.
\end{proof}
\noindent

Recall that we are interested in the probability that $A\cup B$ is
$d_1$-regular for a fixed choice of $A$ and $B$. As already discussed, 
\begin{equation}\label{checkpoint}
\mathbb{P}(A\cup B~ \mbox{is $d_1$-regular})\leq \mathbb{P}(deg_{\mathcal{A}\setminus A}(v)=deg_B(v),\forall v\in A).
\end{equation}

Our next goal is to bound the probability of such an event. To this
end we recall the notion of stochastic dominance. 

Let  $\nu_1$ and $\nu_2$ be two probability measures on $\mathbb{Z}$,
and let $X \sim \nu_1,\,Y \sim \nu_2.$ We use $X \preceq Y$ to denote that $\nu_2$ stochastically dominates $\nu_1$. 

Recall now Definitions \ref{model} and \ref{bdrydef}, as well as
\eqref{impnotation1}.
\begin{lemma}\label{domination} Let $M=\min \{\partial_{\A} A,n/2\}$,
and let $Y=(Y_1,Y_2,\ldots, Y_{M})$ where $Y_i\sim Bin(d_2,2\alpha)$
are i.i.d.. Then
\[
\mathbb{P}(deg_B(v)=g_v~,~\forall\, v\in A~ \big |~\A) ~\le~ \prod_{i=1}^{M}\P(Y_i\ge 1)~.
\]
\end{lemma}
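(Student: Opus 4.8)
The plan is to realize the relevant conditional probability through the exploration process for the $d_2$-regular bipartite graph $(\mathcal{A},\mathcal{B})$ described in Subsection~\ref{ep1}, exposing the vertices of $A$ one at a time, and to show that at each step the conditional probability that $v$ attaches at least one half-edge into $B$ is at most the probability that an independent $Bin(d_2,2\alpha)$ is $\ge 1$. Since the event $\{deg_B(v)=g_v,\ \forall v\in A\}$ forces $deg_B(v)=g_v\ge 1$ for every $v$ with $g_v\ge 1$, and since $g_v=deg_{\A\setminus A}(v)$, the set of $v\in A$ for which $g_v\ge 1$ is precisely the set of vertices with at least one neighbor in $\A\setminus A$; these edges are counted by $\partial_\A A$, so at least $\partial_\A A$ half-edges of $A$ must land in $\A\setminus A$, forcing at least $\min\{\partial_\A A, n/2\}=M$ distinct vertices of $A$ to satisfy $g_v\ge 1$ (using $d_1>d_2$, say, so that a single vertex cannot absorb too much of the boundary — more simply, each vertex contributes at most $d_1$ to $\partial_\A A$, but for the bound we only need that the number of such vertices is at least $M$, which holds because $\partial_\A A\le |A|\,d_1$ and we cap at $n/2$; I would state this cleanly using that each vertex in $A$ with $g_v\ge 1$ contributes at least $1$). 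Thus the left side is at most $\P(deg_B(v)\ge 1\ \forall v\in A')$ for any set $A'\subset A$ of size $M$ consisting of vertices with $g_v\ge 1$.

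The key step is the following one-vertex domination bound. Condition on the exploration history $\mathcal{F}_{i-1}$ up to the $(i-1)$st exposed vertex of $A'$. At this point some half-edges of $\mathcal{B}$ and of $\mathcal{A}$ have been used up; crucially, the number of still-unmatched half-edges incident to $B$ is at most $\alpha n d_2$ (the total number of half-edges in $B$), while the number of still-unmatched half-edges available to be matched with the $d_2$ half-edges of $v_i$ is at least, say, $n d_2/2$ once $n$ is large — because at most $M d_2\le n d_2/2$ half-edges have been consumed by the earlier vertices of $A'$ together with their partners. Then $\P(\text{at least one of }v_i\text{'s } d_2\text{ half-edges goes to } B\mid \mathcal{F}_{i-1})$ is, by a union bound over the $d_2$ half-edges of $v_i$, at most $d_2\cdot\frac{\alpha n d_2}{n d_2/2 - d_2}\le \frac{2\alpha n d_2 \cdot d_2}{n d_2}(1+o(1))$, and more carefully one gets exactly the bound $1-\P(Bin(d_2,2\alpha)=0)=\P(Bin(d_2,2\alpha)\ge1)$ after comparing the sequential matching probabilities $\frac{(\text{half-edges in }B)}{(\text{remaining half-edges})}$ at each of the $\le d_2$ sub-steps with the constant $2\alpha$; the factor $2$ absorbs the slack between $\alpha n d_2$ and the denominator $\ge n d_2/2$. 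I would phrase this as: conditionally on $\mathcal{F}_{i-1}$, $deg_B(v_i)$ is stochastically dominated by $Bin(d_2,2\alpha)$, hence $\P(deg_B(v_i)\ge1\mid\mathcal{F}_{i-1})\le\P(Y_i\ge1)$.

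Finally I would chain the estimates: writing the joint probability as a product of conditional probabilities along the exploration of $v_1,\dots,v_M$ (the first $M$ vertices of $A'$), each factor is $\le\P(Y_i\ge1)$, giving $\prod_{i=1}^M\P(Y_i\ge1)$. The main obstacle is making the per-step stochastic domination by $Bin(d_2,2\alpha)$ rigorous: one must control the ratio (unmatched half-edges incident to $B$)/(total unmatched half-edges) uniformly over the exploration history, which requires bounding how many half-edges the first $M\le n/2$ vertices of $A'$ (and their matches) can deplete — this is exactly why $M$ is capped at $n/2$, so that at least a constant fraction of half-edges remains and the ratio stays $\le 2\alpha$ (up to the $o(1)$ that can be absorbed, or handled by a slightly larger constant). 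A secondary technical point is the reduction from "the event pins down $g_v$ for all $v\in A$" to "at least $M$ vertices have $g_v\ge1$", which uses the definition of $\partial_\A A$ together with $g_v=deg_{\A\setminus A}(v)$.
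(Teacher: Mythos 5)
Your exploration-process skeleton (expose vertices of $A$ one at a time, dominate each conditional step by $Bin(d_2,2\alpha)$ using the cap $M\le n/2$ so that at least half the half-edges remain unmatched, then chain the $M$ factors) is exactly the paper's mechanism. The genuine gap is in your reduction step: you claim the event forces at least $M=\min\{\partial_{\mathcal{A}}A,\,n/2\}$ distinct vertices of $A$ to have $g_v\ge 1$. This is false for the actual sequence $g_v=deg_{\mathcal{A}\setminus A}(v)$: a single vertex can carry up to $d_1$ boundary edges, so when $\partial_{\mathcal{A}}A\le n/2$ (hence $M=\partial_{\mathcal{A}}A$) the number of vertices with $g_v\ge 1$ may be as small as roughly $\partial_{\mathcal{A}}A/d_1$, far below $M$. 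Your parenthetical justification ("each vertex with $g_v\ge1$ contributes at least $1$") only shows the number of such vertices is at most $\partial_{\mathcal{A}}A$, which is the wrong direction. With your argument one only obtains the bound $\mathbb{P}(Y\ge1)^{\#\{v:\,g_v\ge1\}}$, and this weaker exponent is not enough downstream: the union bound in Case~1 of the proof of Theorem~\ref{uniqueness} needs the full exponent $\tfrac{d_1}{8}\alpha n$ (i.e.\ order $M$, not $M/d_1$) to beat the entropy term $2H(\alpha)$.

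The paper closes exactly this hole with Lemma~\ref{constant1}: among all degree sequences with the same total $\sum_v g_v=\partial_{\mathcal{A}}A$, the probability $p(g)$ is maximized by a near-constant sequence $g^*$ with entries in $\{\ell,\ell+1\}$. For $g^*$ the count you need is automatic: if $\ell=0$ then exactly $\partial_{\mathcal{A}}A$ vertices have $g^*_v=1$, and if $\ell\ge 1$ then all $|A|\ge n/2$ vertices have $g^*_v\ge\ell\ge1$; in either case at least $M$ vertices have a nonzero target degree. The exploration order is then chosen so that these vertices come first, and your per-step domination argument, applied to the first $M$ of them, yields the product $\prod_{i=1}^{M}\mathbb{P}(Y_i\ge1)$. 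So the missing ingredient is precisely this rearrangement/maximization step; without it the stated bound does not follow for an arbitrary boundary-degree sequence.
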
  

For notational brevity, we have denoted by $\mathbb{P}(\cdot\mid \mathcal{A})$ the random graph measure $\mathcal{G}(n,d_1,d_2)$ conditioned on the subgraph induced by $\A$.
\begin{proof}
First recall that by Lemma \ref{constant1} the quantity on the left
hand side is maximized when for all $v$,  $g_v\in \{\ell,\ell+1\}.$ Hence we assume that this is the case.
Now to prove the lemma we consider the exploration process defined
above. The definition requires us to fix an order on the vertices of
$\mathcal{A}$; we do this in the  following way. Consider the  two cases: 
\begin{itemize}
\item [$i$.]$\ell=0:$
First come all the vertices $v_i \in A$ with $g_i=1$, followed by the remaining vertices in $A$. Then come all the vertices in $\mathcal{A}\setminus A$.
\item [$ii$.] $\ell>0:$
First come all the vertices $v_i \in A$ with $g_i=\ell$, followed by the remaining vertices in $A$. Then come all the vertices in $\mathcal{A}\setminus A$. 
\end{itemize}
Recall that $\mathcal{F}_i$ is the filtration up to vertex $i.$
Note that, for $1\leq i \leq \min (\partial_{\mathcal{A}} A,n/2)$, 
\[
deg_B(v_i) \big |\mathcal{F}_{i-1}~\preceq~Bin\left(d_2,\frac{\alpha nd_2-(i-1)}{nd_2-id_2}\right).
\]
This follows from the simple observation that for any of the cases
mentioned above for the $i^{th}$ vertex, there are at most $(\alpha
nd_2-(i-1))$ half edges in $B$ that haven't yet been matched. 
Now note that since by hypothesis $i\le \frac{n}{2},$
\begin{align*}
\frac{\alpha nd_2-(i-1)}{nd_2-id_2}& \le \frac{\alpha n d_2}{nd_2/2}\\
&= 2\alpha. 
\end{align*}
Thus we are done.
\end{proof}

As already used in the proof of the above lemma, 
\[
\mathbb{P}(A\cup B~\mbox{is $d_1$-regular}\mid \mathcal{A})\leq p(\ell,\ell,\ldots,\ell,\ell+1,\ldots,\ell+1)\] for some $\ell=\ell(\mathcal{A},A)$. 
\noindent
In case $i.$ we see that by Lemma \ref{domination}
\begin{align}
\label{exp1}
p(0,0,\ldots,0,1,\ldots,1)=p(1,1,\ldots, 1,0,\ldots,0) & \le \prod_{i=1}^{\min \{n/2,\partial_{\mathcal{A}}A\}}\mathbb{P}(Y_i\ge 1) \\
\nonumber 
& \le  \prod_{i=1}^{\min\{n/2,\partial_{\mathcal{A}}A\}} (2 d_2\alpha)
\end{align}
The first equality follows by exchangeability. The first inequality follows from Lemma \ref{domination}. The second is a simple consequence of the fact that for a nonnegative variable the probability of being bigger than $1$ is at most its expectation.

In case $ii$ by similar arguments 
\begin{align}
\label{exp2}
p(\ell,\ell,...\ell,\ell+1,...,\ell+1)&\le \prod_{i=1}^{n/2}\mathbb{P}(Y_i\ge 1)\\
\nonumber
& \le  \prod_{i=1}^{n/2} (2 d_2\alpha).
\end{align}
Note that in \eqref{exp2} the term $\partial_{\mathcal{A}}A$ does not appear. This is because in this case by hypothesis $$|\partial_{\mathcal{A}}A|\ge \ell |A|\ge \frac{n}{2}.$$
To proceed with the proof of Theorem \ref{uniqueness} we quote two standard results on the expansion of random $d$-regular graphs.
Let $\gamma$ be the spectral gap for the operator of the random walk in the uniform random regular graph $G\in \mathcal{G}(n,d)$, i.e.:
\begin{equation}\label{gapdef}
\gamma=1-\frac{\lambda_2}{d}
\end{equation}
where $\lambda_2$ is the second largest eigenvalue of the adjacency matrix of $G$.

\begin{theorem}\cite[Theorem 1.1]{F08} With probability going to $1$ as $n \to \infty,$ $$\gamma \ge 1-\frac{2}{\sqrt{d}}.$$
\end{theorem}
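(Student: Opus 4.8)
The plan is to observe that, since $\gamma=1-\lambda_2/d$, the statement is equivalent to the bound $\lambda_2\le 2\sqrt d$ holding a.a.s., where $\lambda_2$ denotes the second largest eigenvalue of the adjacency matrix of a uniform $G\in\mathcal G(n,d)$. This is weaker than Friedman's theorem resolving the Alon conjecture \cite{F08}, which gives $\lambda_2\le 2\sqrt{d-1}+\e$ a.a.s.\ for every $\e>0$; since $2\sqrt{d-1}<2\sqrt d$, one may simply take $\e$ below the gap $2(\sqrt d-\sqrt{d-1})$ and invoke \cite{F08}. For completeness, though, I would sketch a self-contained derivation of the weaker bound $\lambda_2\le 2\sqrt d$ via the moment method, since that is all that is needed in what follows.

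First I would pass to the configuration model: by the transfer principle recalled in Section \ref{pm1}, any a.a.s.\ statement for the multigraph it produces also holds for the uniform simple model, because the probability of simplicity is bounded away from $0$. For the resulting $d$-regular multigraph the all-ones vector remains a Perron eigenvector with eigenvalue $d$, so writing $\lambda':=\max_{i\ge 2}|\lambda_i|$ we have, for every integer $k\ge1$,
\[
(\lambda')^{2k}\;\le\;\sum_{i\ge 2}\lambda_i^{2k}\;=\;\mathrm{tr}(A^{2k})-d^{2k},
\]
and Markov's inequality then gives $\mathbb P(\lambda_2\ge t)\le\mathbb P(\lambda'\ge t)\le t^{-2k}\bigl(\mathbb E\,\mathrm{tr}(A^{2k})-d^{2k}\bigr)$. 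The remaining task is to estimate $\mathbb E\,\mathrm{tr}(A^{2k})$ and to choose $t$ and $k=k(n)$.

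The quantity $\mathrm{tr}(A^{2k})$ counts closed walks of length $2k$ in the random multigraph, and I would analyse its expectation by grouping walks according to the isomorphism type of the subgraph they traverse together with their visiting pattern, as in the classical trace-method analyses for random regular graphs. The dominant contribution comes from the \emph{tree-like} walks, which traverse a subtree of $G$ crossing each of its edges an even number of times; there are $(1+o(1))\,n\,(d-1)^{k}C_k$ of these, $C_k$ the $k$-th Catalan number, contributing $(1+o(1))\,n\,k^{-3/2}\pi^{-1/2}(2\sqrt{d-1})^{2k}$. Walks whose traversed subgraph has positive excess (a cycle, or an edge used an odd number of times) lose a factor of order $1/n$ per unit of excess, so as long as $k=O(\log n)$ one gets $\mathbb E\,\mathrm{tr}(A^{2k})=n^{1+o(1)}(2\sqrt{d-1})^{2k}$. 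Taking $t=2\sqrt d$ and $k=\lceil C\log n\rceil$ with $C=C(d)$ large enough,
\[
\mathbb P(\lambda_2\ge 2\sqrt d)\;\le\;n^{1+o(1)}\Bigl(\frac{\sqrt{d-1}}{\sqrt d}\Bigr)^{2k}\;=\;n^{\,1+o(1)-2C\log\sqrt{d/(d-1)}}\;\longrightarrow\;0,
\]
which would give $\lambda_2\le 2\sqrt d$ a.a.s., i.e.\ $\gamma\ge 1-2/\sqrt d$ a.a.s.

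The hard part is the uniform control, over all $k$ up to order $\log n$, of the walks of positive excess: one must show that for each fixed excess $g\ge1$ the number of closed $2k$-walks whose traversed subgraph has excess $g$ is at most $n^{1-g}k^{O(g)}(2\sqrt{d-1})^{2k}$, and then sum geometrically in $g$. This combinatorial bookkeeping is the core of the argument, and it is precisely where Friedman's proof of the sharp constant $2\sqrt{d-1}$ becomes delicate; for the slack bound $2\sqrt d$ used here the estimates can be cruder, but the tree-counting computation and the passage between the configuration multigraph and the uniform simple model (Section \ref{pm1}) still have to be carried out.
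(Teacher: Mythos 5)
Your first paragraph is exactly how the paper handles this statement: the paper gives no proof at all, it simply quotes Friedman's theorem \cite{F08}, and your reduction is correct --- since $\gamma=1-\lambda_2/d$, the claim is equivalent to $\lambda_2\le 2\sqrt{d}$ a.a.s., which follows from Friedman's bound $\max_{i\ge 2}|\lambda_i|\le 2\sqrt{d-1}+\varepsilon$ by choosing $\varepsilon<2(\sqrt{d}-\sqrt{d-1})$. If you stop there, you are done, and done in the same way as the paper.

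The ``for completeness'' moment-method sketch, however, has a genuine gap, and it is not a matter of bookkeeping that becomes ``cruder'' at the slack threshold $2\sqrt{d}$. The key claim --- that walks of excess $g$ contribute at most $n^{1-g}k^{O(g)}(2\sqrt{d-1})^{2k}$ to $\mathbb{E}\,\mathrm{tr}(A^{2k})$ --- is false once $k=C\log n$ with the large constant $C$ you need to kill the factor $n$ in front of the tree-like term. The problem is tangles: rare dense local structures (in the configuration multigraph, e.g.\ a bundle of $m$ parallel edges between two vertices, which occurs with probability of order $n^{2-m}$; in the simple model, small subgraphs of excess $g$ with spectral radius well above $2\sqrt{d}$) support roughly $\rho_H^{2k}$ closed walks with $\rho_H$ as large as order $d$, so their contribution is about $n^{-O(g)}\rho_H^{2k}=n^{-O(g)+2C\log \rho_H}$, which for large $C$ overwhelms both $n(2\sqrt{d-1})^{2k}$ and the target $(2\sqrt{d})^{2k}$; the loss is exponential in $k$, not the polynomial factor $k^{O(g)}$ you assert. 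Consequently $\mathbb{E}\,\mathrm{tr}(A^{2k})$ is \emph{not} $n^{1+o(1)}(2\sqrt{d-1})^{2k}$ in this regime, and plain Markov on the trace does not yield $2\sqrt{d}$; this is precisely why Friedman's selective-trace argument (and Bordenave's tangle-free non-backtracking decomposition) are needed, and why the historically ``easy'' trace arguments only give weaker bounds such as $O(d^{3/4})$ or $C\sqrt{d}$ with a worse constant, after additional work of Friedman--Kahn--Szemer\'edi type. Either delete the sketch and rely on the citation, or restructure it by first conditioning on the absence of such dense local structures (tangle-freeness) and bounding a correspondingly restricted walk count --- but that is a substantially different and harder argument than the one you outline.
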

The next result was proven independently in \cite{LS88} and \cite{JS89}. We will use it as it appears in \cite[Theorem 13.14]{LPW}.
\begin{theorem}\label{min_cut} Let $G$ be a $d$-regular graph in $n$ vertices. For any $S \subset V(G),$ with $|S|\le \frac{n}{2},$
$$\frac{\gamma}{2}\le \frac{|\partial S|}{d|S|}.$$
\end{theorem}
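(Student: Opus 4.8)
The plan is to prove this by the standard variational (Rayleigh quotient) argument, since the claimed bound is precisely the easy direction of the discrete Cheeger inequality. First I would recall that for the $d$-regular graph $G$ the combinatorial Laplacian $L=dI-A$ satisfies $\langle f,Lf\rangle=\sum_{(u,v)\in E(G)}(f(u)-f(v))^2$ for every $f\colon V(G)\to\mathbb{R}$, and that its smallest eigenvalue is $0$, attained at the constant vector $\mathbf{1}$. Consequently, by the Courant--Fischer min-max theorem,
\[
d-\lambda_2=\min_{f\perp\mathbf{1},\,f\neq 0}\frac{\sum_{(u,v)\in E(G)}(f(u)-f(v))^2}{\sum_{v\in V(G)}f(v)^2},
\qquad\text{so that}\qquad
\gamma=\frac{1}{d}\min_{f\perp\mathbf{1},\,f\neq 0}\frac{\sum_{(u,v)\in E(G)}(f(u)-f(v))^2}{\sum_{v\in V(G)}f(v)^2}.
\]
The entire argument then reduces to plugging one well-chosen test function into this minimum.

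Given $S\subset V(G)$ with $|S|\le n/2$, I would take the recentered indicator $f:=\mathbf{1}_S-\frac{|S|}{n}\,\mathbf{1}$; the recentering is exactly what makes $\sum_{v}f(v)=0$, hence $f\perp\mathbf{1}$, so $f$ is admissible in the minimum (and $f\neq 0$ since $S$ is a proper nonempty subset). Two routine computations finish it: the numerator is $\sum_{(u,v)\in E(G)}(\mathbf{1}_S(u)-\mathbf{1}_S(v))^2=|\partial S|$, because the summand equals $1$ on edges crossing the cut and $0$ otherwise; and the denominator is $\sum_v f(v)^2=|S|\bigl(1-\frac{|S|}{n}\bigr)$, which is at least $|S|/2$ precisely because $|S|\le n/2$. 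Substituting these into the identity above gives $\gamma\le\frac{1}{d}\cdot\frac{|\partial S|}{|S|(1-|S|/n)}\le\frac{2|\partial S|}{d|S|}$, and dividing by $2$ yields $\frac{\gamma}{2}\le\frac{|\partial S|}{d|S|}$.

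I do not expect a genuine obstacle here, as this is a textbook fact (see \cite{LS88,JS89} and \cite[Theorem 13.14]{LPW}); the only points requiring a little care are the orthogonality bookkeeping forced by recentering $\mathbf{1}_S$, and the elementary inequality $|S|(1-|S|/n)\ge|S|/2$, which is where the hypothesis $|S|\le n/2$ is used. Of course one could also simply quote it verbatim from the references, which is the route the paper takes.
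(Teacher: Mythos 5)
Your proof is correct. A point of comparison: the paper does not prove Theorem \ref{min_cut} at all --- it is imported as a known result from \cite{LS88} and \cite{JS89}, in the form given in \cite[Theorem 13.14]{LPW} --- so your write-up supplies the argument the paper leaves to the references. What you give is the standard ``easy direction of Cheeger'' computation, and it checks out: with $\gamma=1-\lambda_2/d$ as in \eqref{gapdef} and $\mathbf{1}$ an eigenvector of the adjacency matrix with eigenvalue $d$, Courant--Fischer applied to $L=dI-A$ on the orthogonal complement of $\mathbf{1}$ gives exactly your variational identity for $d-\lambda_2$; the recentered indicator $f=\mathbf{1}_S-\frac{|S|}{n}\mathbf{1}$ is admissible; the numerator is $|\partial S|$ since only cut edges contribute; and the denominator is $|S|\bigl(1-\tfrac{|S|}{n}\bigr)\ge |S|/2$ under the hypothesis $|S|\le n/2$, which yields $\gamma\le \frac{2|\partial S|}{d|S|}$ as claimed. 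Two small remarks: the argument needs no connectivity assumption (if $G$ is disconnected then $\lambda_2=d$, $\gamma=0$, and the bound is vacuous, but your inequality chain is still valid), and the hypothesis $|S|\le n/2$ enters only through the elementary bound on the denominator, exactly as you flag. This self-contained derivation is a reasonable alternative to quoting the references, and it matches how the result is used in the paper (with $S=$ a subset of $\mathcal{A}$ in the expansion estimate \eqref{expans1}).
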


Putting everything together we get the following:
For $d_1\ge 16$, a.a.s., for all $S\subset \mathcal{A} $ with $|S|\le \frac{n}{2}$ $$|\partial_{\mathcal{A}} S|\ge \frac{d_1}{4}|S|.$$
In particular since $|A|\ge n/2$ it follows that, a.a.s.,
\begin{equation}\label{expans1}
|\partial_{\A} A|=|\partial_{\A} (\mathcal{A}\setminus A)|~\ge~ \frac{d_1}{4}|\mathcal{A}\setminus A|.
\end{equation}

In case $i.$ ($\ell=0$) plugging \eqref{expans1} in \eqref{exp1} we get

\begin{align}\label{otra_exp}
\mathbb{P}(A\cup B~\mbox{is $d_1$-regular}| \mathcal{A}) &\le \prod_{i=1}^{\min(n/2,|\partial_{\A} A|)}\mathbb{P}(Y_i\ge 1) ~\le~ \prod_{i=1}^{\frac{|\partial_{\A} A|}{2}}\mathbb{P}(Y_i\ge 1)\\
\nonumber 
&\le \prod_{i=1}^{\frac{d_1}{8}\alpha n}\mathbb{P}(Y_i\ge 1)
\end{align}
assuming that the $d_1$-regular graph on $\mathcal{A}$ satisfies \eqref{expans1}. The second inequality follows from the simple observation that since $\ell=0,$  we have $|\partial A| \le n.$

Recall that we want an upper bound on the right hand side of \ref{checkpoint}. Combining Lemma \ref{domination}, \eqref{exp2} and \eqref{otra_exp} we get
\begin{align}\label{upbnd1}
\mathbb{P}(A\cup B~\mbox{is $d_1$-regular}\mid \mathcal{A})~\leq~  \mathbb{P}(Y\ge 1)^{\frac{d_1}{8}\alpha n}+\mathbb{P}(Y\ge 1)^{n/2}.
\end{align}
The two terms on the right hand side correspond to the two cases $\ell=0 $ and $\ell \ge 1$.

Next we show that the bounds in \eqref{upbnd1} are good enough to be able to use union bound over all possible choices of $A$ and $B.$ 
There are  $\binom{n}{\alpha n}^2$ ways to choose $A$ and $B$. Denote
by $R_{\alpha}$ the event that $A\cup B$ is $d_1$-regular for \textit{at least} one choice of $A$ and $B$. Thus by union bound,
\begin{align}\label{finalexp1}
\mathbb{P}(R_{\alpha})& \leq \binom{n}{\alpha n}^2\left[\mathbb{P}(Y\ge 1)^{\frac{d_1}{8}\alpha n}+\mathbb{P}(Y\ge 1)^{n/2}\right].
\end{align}

We now estimate the right hand side using Stirling's formula. 
Let $$H(x)=-x\log x-(1-x)\log(1-x)$$ be the binary entropy function. 
Then the two terms in the right hand side of \eqref{finalexp1}
are at most
$$\frac{2^{n[2H(\alpha)+\frac{d_1}{8}\alpha\log(\mathbb{P}(Y\ge 1))]}}{\sqrt{\alpha n}} \ \ \ \ \text{and} \ \ \ \ \frac{2^{n[2H(\alpha)+\frac{\log(\mathbb{P}(Y\ge 1))}{2}]}}{\sqrt{\alpha n}},$$
up to universal constants involved in Stirling's approximation. Our goal would be to upper bound the two exponents,
\begin{equation}\label{twocase1}
2H(\alpha)+\frac{d_1}{8}\alpha\log(\mathbb{P}(Y\ge 1))~~~\mbox{and}~~~2H(\alpha)+\frac{\log(\mathbb{P}(Y\ge 1))}{2}.
\end{equation}
Recall that $\alpha$ was defined in \eqref{frac1}.
Consider the three following cases:

\textbf{CASE $1$:} 
$\alpha \le \frac{1}{d^2_2}$.

In this case we will use the bound $\mathbb{P}(Y\ge 1)\le 2d_2 \alpha$  by Lemma \ref{domination}.
Plugging this in \eqref{twocase1} we get the following upper bounds 
$$2H(\alpha)+\frac{d_1}{8}\alpha\log(2d_2 \alpha)~~~\mbox{and}~~~2H(\alpha)+\frac{\log(2d_2 \alpha)}{2}.$$
Now,
\begin{align*}
2H(\alpha)+\frac{d_1}{8}\alpha\log(2d_2 \alpha)& =  -2\alpha \log(\alpha)+\frac{d_1}{8}\alpha\log(2d_2 \alpha) -2(1-\alpha)\log(1-\alpha)\\
&\le  \alpha \log(\alpha)(\frac{d_1}{32}-2)-2(1-\alpha)\log(1-\alpha)\\
& \le \alpha \log(\alpha)(\frac{d_1}{32}-4).
\end{align*}
To see the above inequalities first note that since $\alpha \le \frac{1}{d_2^2}$, $\log(2d_2 \alpha) \le \frac{\log(\alpha)}{4} $ as soon as $d_2\ge 4,$ and also $|(1-\alpha)\log(1-\alpha)|\le 4\alpha.$
Similarly for large enough $d_2$ we have 
\begin{align*}
2H(\alpha)+\frac{\log(2d_2 \alpha)}{2}& =  -2\alpha \log(\alpha)+\frac{\log( \alpha)}{8} -2(1-\alpha)\log(1-\alpha)\\
& \le \frac{\log( \alpha)}{16}~.
\end{align*}

Thus for large enough $d_2\le d_1$ $$\mathbb{P}(R_{\alpha})\le \frac{2^{3\alpha \log(\alpha)n}}{\sqrt{\alpha n}}. $$
Hence \begin{eqnarray}
\mathbb{P}\left(\bigcup_{\alpha\in I_1}R_{\alpha}\right)
 &\le& \sum_{\alpha\in I_1}\frac{2^{3\alpha \log(\alpha)n}}{\sqrt{\alpha n}}\nonumber \\
 &\le& n 2^{-3\frac{1}{n}\log(n)n} \nonumber \\ 
 &\le&  \frac{1}{n}, \label{pop}
\end{eqnarray}
where $\alpha\in I_1=(0,\frac{1}{d_2^2})$. The last term is derived using the following:
 The function $\alpha \log \alpha $ is decreasing from $0$ to $1/2$ and the least possible value of $\alpha=\frac{1}{n}$. Plugging this value of $\alpha$  we get the above. 

\textbf{CASE $2$:}  $\frac{1}{d^2_2}\le \alpha \le \frac{C}{d_2}.$ 

Now clearly in this range of $\alpha$, by stochastic domination $\mathbb{P}(Bin(d_2,\alpha)\ge 1)$ is maximized when $\alpha=\frac{C}{d_2}.$
We now use the Poisson approximation of $Bin(d_2,\frac{2C}{d_2})$ to bound the probability $\mathbb{P}(Y\ge 1)$ by a universal constant $c$ which is a function of $C$ for  all $\alpha $ in this range.
Using this, we rewrite \eqref{finalexp1} to get 
\begin{align*}
2H(\alpha)+\frac{d_1}{8}\alpha\log(c)&\le  -2\alpha \log(\alpha)+\frac{d_1}{8}\alpha\log(c) -2(1-\alpha)\log(1-\alpha)\\
&\le -4\alpha \log (\alpha)+\frac{d_1}{8}\alpha \log(c)\\
&\le -5\alpha 
\end{align*}  
for large enough $d_1.$
Similarly for large enough $d_2$ we have 
\begin{align*}
2H(\alpha)+\frac{\log(c)}{2}& \le \frac{\log(c)}{4}.
\end{align*}

Plugging in we get
\begin{align}\label{fexp1}
\mathbb{P}\left(\bigcup_{\alpha\in I_2}R_{\alpha}\right)\le \sum_{\alpha\in I_2}\frac{2^{-5\alpha n}}{\sqrt{\alpha n}}
& \le n 2^{-\frac{5}{d^2_2}n},
\end{align} 
where $I_2=[\frac{1}{d_2^2},\frac{C}{d_2}]$. 
Thus the proof for the case when $\alpha \le \frac{C}{d_2}$ is complete.

\textbf{CASE $3$:} $\frac{C}{d_2} \leq \alpha \leq \frac{1}{2}.$\\

We first need a preliminary lemma. For $d_2 \in \mathbb{N}$ and $\alpha \in (0,1)$ let $Z_{d_2,p}\sim Bin(d_2,p).$
\begin{lemma}\label{supbound}There exists a constant $C_1$ such that for all large enough $d_2$  
$$\sup_{p \in (\frac{C_1}{d_2}, \frac{2}{3}) }\sup_{1\le i\le d_2}\mathbb{P}(Z_{d_2,\alpha}=i)\le \frac{1}{400}.$$
 \end{lemma}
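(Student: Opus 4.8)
The plan is to show that a binomial $\mathrm{Bin}(d_2,p)$ is sufficiently spread out that no single point carries more than a $1/400$ fraction of the mass, uniformly over $p \in (C_1/d_2, 2/3)$ and over the location $i$. The key quantitative tool is the local limit behaviour of the binomial: for $p$ bounded away from $0$ and $1$ on the scale $1/d_2$, the variance $d_2 p(1-p)$ is large (at least of order $C_1$), and the maximal point probability of a binomial is controlled by $1/\sqrt{\mathrm{Var}}$ up to an absolute constant. More precisely, I would invoke (or prove in one line via Stirling applied to $\binom{d_2}{i}p^i(1-p)^{d_2-i}$ at the mode $i^* = \lfloor (d_2+1)p\rfloor$) the standard bound
\[
\max_{0\le i\le d_2}\mathbb{P}(Z_{d_2,p}=i)\ \le\ \frac{C'}{\sqrt{d_2 p(1-p)}}
\]
for a universal constant $C'$.

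From here the argument splits into the two ends of the range of $p$. When $p$ is in the bulk, say $C_1/d_2 \le p \le 1/2$ with $d_2 p \ge C_1$, one has $d_2 p(1-p) \ge d_2 p /2 \ge C_1/2$, so the displayed bound gives $\max_i \mathbb{P}(Z_{d_2,p}=i) \le C'\sqrt{2/C_1}$, which is $\le 1/400$ once $C_1$ is chosen large enough (namely $C_1 \ge 2\cdot(400 C')^2$); note this choice is absolute and does not depend on $d_2$. When $1/2 \le p \le 2/3$ we have $d_2 p(1-p) \ge d_2/6 \to \infty$, so for all large enough $d_2$ the same bound is $\le 1/400$ with room to spare. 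Combining the two sub-cases covers all of $(C_1/d_2, 2/3)$, and the restriction $1 \le i \le d_2$ in the statement is harmless since we bound the maximum over all $i$ anyway.

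The only mildly delicate point — and the one I would treat as the main obstacle — is justifying the local-limit bound $\max_i \mathbb{P}(Z_{d_2,p}=i) \le C'/\sqrt{d_2 p(1-p)}$ with a genuinely universal constant $C'$ valid for \emph{all} $d_2$ and all $p$ in the stated range, rather than only asymptotically. This can be done cleanly by evaluating the ratio of consecutive terms $\mathbb{P}(Z=i+1)/\mathbb{P}(Z=i) = \frac{(d_2-i)p}{(i+1)(1-p)}$ to locate the mode, then applying the Stirling bounds $\sqrt{2\pi n}\,(n/e)^n \le n! \le e\sqrt{n}\,(n/e)^n$ to the three factorials in $\binom{d_2}{i^*}p^{i^*}(1-p)^{d_2-i^*}$; the exponential terms cancel exactly because $i^*/d_2 \approx p$, leaving a ratio of square-root factors that is $\Theta(1/\sqrt{d_2 p(1-p)})$. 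One must be slightly careful when $i^*$ or $d_2 - i^*$ is small (i.e.\ when $d_2 p$ or $d_2(1-p)$ is $O(1)$), but in our range $d_2 p \ge C_1$ is large and $d_2(1-p) \ge d_2/3$ is large, so both endpoints of the relevant binomial are comfortably in the regime where Stirling applies. Once this bound is in hand, the rest is the elementary case split above, and the lemma follows.
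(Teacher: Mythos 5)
Your proposal is correct and follows essentially the same route as the paper: both arguments bound the maximal point probability of the binomial at its mode $\lfloor (d_2+1)p\rfloor$ via Stirling, obtaining a bound of order $1/\sqrt{C_1}$ (the paper writes it as $O(1/\sqrt{k})$ with $k \ge C_1$, you as $C'/\sqrt{d_2p(1-p)}$), and then choose $C_1$ large; your extra case split at $p=1/2$ and the explicit uniformity discussion are just a more careful rendering of the same idea.
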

 
\begin{proof} It is a standard fact that for any $d_2,\alpha$
$$\sup_{1\le i\le d_2}\mathbb{P}(Z_{d_2,\alpha}=i) =\mathbb{P}(Z_{d_2,\alpha}=\lfloor{(d_2+1)\alpha \rfloor}).$$ 
Let $k=\lfloor{(d_2+1)\alpha \rfloor}.$
We now estimate $$\mathbb{P}(Z_{d_2,\alpha}=k)={d_2 \choose k} \alpha^{k}(1-\alpha )^{d_2-k}.$$ Since $k>C_1$ by hypothesis using Stirling's formula we have 
\begin{align*}
\mathbb{P}(Z_{d_2,\alpha}=k)&=O\left(\frac{1}{\sqrt{k}}2^{H(\alpha)d_2}2^{-H(\alpha)d_2}\right)\\
&=O\left(\frac{1}{\sqrt{C_1}}\right) \le \frac{1}{400} 
\end{align*}
 for large enough $C_1.$ 
\end{proof}

We now need another lemma. Consider the exploration process for
sampling the bipartite regular graph given by
$\mathcal{A},\mathcal{B}$ (sketched in  Definition
\ref{conf_process}), where vertices of $\mathcal{A}$ are exposed one
by one to find out the neighbors in $\mathcal{B}.$ We do this first
for each half edge incident to the vertices in $A$,  followed by the half edges corresponding to the rest of the vertices in $\mathcal{A}.$ Let us parametrize time by the number of half edges. Consider the Bernoulli variable
\begin{equation}\label{berno1}
B_t=\mathbf{1}(\mbox{the $t^{th}$ half edge is matched to a half edge in }B).
\end{equation}
Now note that the first $d_2$ half edges correspond to $deg_{B}(v_1),$ the second $d_2$ half edges correspond to $deg_{B}(v_2),$ and so on. 
We now make a simple observation that the Bernoulli probabilities do not change much from time $t$ to $t+d_2.$ This then shows that $deg_{B}(v_i)$ are essentially Binomial variables with probability depending on the filtration at time $(id_2).$
Formally, we have the following lemma: let $\mathcal{F}_i$ be the filtration generated up to time $(id_2)$ (when all the half edges up to vertex $i$ have been matched).
\begin{lemma}\label{totalvar1}For any $i\le \frac{n}{4}$ there exists a $p_i$ which is $\mathcal{F}_{i-1}$-measurable such that $$||deg_{B}(v_i)| \mathcal{F}_{i-1},Bin(d_2,p_i)||_{TV}= O\left(\frac{1}{n}\right),$$  where $||\cdot, \cdot||_{TV}$ denotes the total variation norm and the constant in the $O(\cdot)$ notation depends only on $d_2.$
\end{lemma}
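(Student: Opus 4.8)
The plan is to identify the conditional law of $deg_B(v_i)$ given $\mathcal{F}_{i-1}$ exactly and then compare it with a binomial through the classical ``sampling with versus without replacement'' coupling. Run the exploration process for the bipartite graph between $\mathcal{A}$ and $\mathcal{B}$ with the half edges exposed in the stated order, so that $\mathcal{F}_{i-1}$ records the matching of all $d_2$ half edges of the first $i-1$ vertices. At that time exactly $(i-1)d_2$ of the $nd_2$ half edges on the $\mathcal{B}$-side have been matched, so there remain $N:=(n-i+1)d_2$ unused half edges on the $\mathcal{B}$-side; let $K_{i-1}$ be the (random, $\mathcal{F}_{i-1}$-measurable) number of them incident to vertices of $B$. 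The $d_2$ half edges of $v_i$ are then matched one at a time to uniformly chosen unused half edges on the $\mathcal{B}$-side, and in the bipartite model they cannot be matched to one another, so these are $d_2$ genuine draws without replacement from a population of $N$ items of which $K_{i-1}$ are ``good''. Hence $deg_B(v_i)\mid\mathcal{F}_{i-1}$ has exactly the hypergeometric law $\mathrm{Hyp}(N,K_{i-1},d_2)$, and we take $p_i:=K_{i-1}/N$, which is $\mathcal{F}_{i-1}$-measurable.

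It then remains to show $||\,\mathrm{Hyp}(N,K_{i-1},d_2),\,Bin(d_2,p_i)\,||_{TV}=O(1/n)$. For this I would use the standard coupling: draw $d_2$ i.i.d.\ uniform labels in $\{1,\dots,N\}$ and count those landing on ``good'' items to obtain a $Bin(d_2,p_i)$ variable; if the $d_2$ labels happen to be distinct, reuse them as the without-replacement sample, so the two success counts coincide, and otherwise resample the without-replacement draw independently. The without-replacement marginal is still exactly $\mathrm{Hyp}(N,K_{i-1},d_2)$, since a uniformly random tuple conditioned on having distinct coordinates is uniform over all distinct tuples; and the coupling fails only on the event that some label is repeated, of probability at most $\binom{d_2}{2}/N$. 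Hence the total variation distance is at most $\binom{d_2}{2}/N$. Since $i\le n/4$ forces $N=(n-i+1)d_2\ge \tfrac34 nd_2$, this is at most $2(d_2-1)/(3n)=O(1/n)$, with the implied constant depending only on $d_2$, as claimed.

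I do not expect a genuine obstacle here: the only points needing care are the bookkeeping that makes the conditional law exactly $\mathrm{Hyp}$ (in particular that $v_i$'s half edges cannot be matched to one another, so the draws are truly without replacement from the $\mathcal{B}$-side pool) and the observation that $N=\Theta(n)$ uniformly over $i\le n/4$, which is exactly what forces the $O(1/n)$ rate rather than something weaker for $i$ close to $n$. Equivalently, one could argue directly with the per-half-edge Bernoulli variables $B_t$ from \eqref{berno1}, noting that over any window of $d_2$ consecutive steps their conditional success probabilities move by $O(1/n)$; this is the same computation, since the within-vertex dependence among the $B_t$ is precisely the without-replacement correction encoded by the hypergeometric.
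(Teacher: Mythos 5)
Your proof is correct, and it is a tighter, more explicit version of what the paper does. The paper's own argument works at the level of the per-half-edge indicators $B_t$ of \eqref{berno1}: it records that each $B_t$ is conditionally Bernoulli with parameter $\hat p_t=\frac{\alpha n d_2-\sum_{j\le t-1}B_j}{nd_2-t}$, observes $|\hat p_t-\hat p_{t-1}|\le 4/n$ for $t\le nd_2/4$, and then asserts the lemma "follows since $deg_B(v_i)=\sum_{(i-1)d_2<j\le id_2}B_j$" --- leaving implicit the final coupling of $d_2$ dependent Bernoullis with slowly drifting conditional parameters to i.i.d.\ $Bern(p_i)$'s, which is exactly the step your write-up makes precise. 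You instead identify the conditional law of $deg_B(v_i)$ given $\mathcal{F}_{i-1}$ exactly as hypergeometric (valid here because the bipartite exploration matches $v_i$'s $d_2$ half edges to a uniform $d_2$-subset of the $N=(n-i+1)d_2$ unmatched $\mathcal{B}$-side half edges, so they cannot pair with each other), take $p_i=K_{i-1}/N$, and bound the with/without-replacement discrepancy by the collision probability $\binom{d_2}{2}/N\le 2(d_2-1)/(3n)$ via the standard coupling; the marginal check (i.i.d.\ labels conditioned on distinctness are uniform over distinct tuples) is handled correctly, and the restriction $i\le n/4$ is used exactly where it is needed, to keep $N=\Theta(n)$. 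What your route buys is a fully quantitative TV bound with an explicit constant depending only on $d_2$ and an exact identification of the conditional law; what the paper's route buys is brevity and direct compatibility with the inequalities \eqref{lb1}--\eqref{ub1} used immediately afterwards in Corollary \ref{binomapprox}, though your $p_i=K_{i-1}/N$ satisfies those same bounds, so the corollary goes through unchanged.
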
 
 
\begin{proof}To show this first note that the random variables $B_{t}$ in \eqref{berno1}  are Bernoulli variables with probability $$\hat p _t=\frac{\alpha n d_2 -\sum_{j\le t-1}B_j}{nd_2-t}~.$$  Then clearly   
for all $t\le \frac{nd_2}{4},$ $|\hat p _t-\hat p _{t-1}|\le \frac{4}{n}~.$
The proof thus follows since $$deg_{B}(v_i)=\sum_{(i-1)d_2<j\le id_2}B_j~.$$
\end{proof} 
\noindent
Recall $\ell$ from Lemma \ref{constant1}. Now suppose $A\cup B$ is $d_1-$regular. Then by definition 
\begin{align*}
\ell |A|\le \sum_{i=1}^{|A|}deg_{B}v_i\le d_2|B|& =\alpha n d_2 \\
\implies \ell \le \frac{\alpha}{1-\alpha}d_2 \le 2\alpha d_2.
\end{align*}
\noindent 
Using the above we get that for all $j\le \frac{n}{4}$:
\begin{equation}\label{lb1}
\frac{\alpha nd_2-j(\ell+1)}{nd_2-jd_2}\ge\frac{\alpha nd_2-\frac{n}{4}(3\alpha d_2)}{nd_2}\ge  \frac{\alpha}{4}.
\end{equation}

Above we used the fact that  $\ell+1 \le 2\alpha d_2+1\le 3\alpha d_2$ since $\alpha d_2>C>1$ by hypothesis. 
Also clearly for $j\le n/4$,  since $\alpha\le 1/2,$
\begin{equation}\label{ub1}
\frac{\alpha nd_2-j\ell}{nd_2-jd_2}\le 2/3.
\end{equation}
Assume that all the $deg_{B}(v_i)\in \{\ell,\ell+1\}.$ We have the following corollary.
\begin{corollary}\label{binomapprox}For all $1\le i\le n/4,$ if $deg_{B}(v_j)\in \{\ell,\ell+1\},$ for some $\ell \le 2d_2 \alpha$  for all $j\le i$ then there exists $p_{i}$ which is $\mathcal{F}_{i-1}$ measurable such that $$||deg_{B}(v_i),Bin(d_2,p_{i})||_{TV}= O\left(\frac{1}{n}\right)$$  where $\frac{\alpha}{4} \le p_{i}\le 2/3.$
\end{corollary}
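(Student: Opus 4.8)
The plan is to read Corollary~\ref{binomapprox} off from Lemma~\ref{totalvar1} together with the two deterministic estimates \eqref{lb1} and \eqref{ub1}. The total variation part is already in hand, so all that is left is to identify the parameter $p_i$ explicitly and to verify that it lands in the window $[\alpha/4,2/3]$.

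First I would apply Lemma~\ref{totalvar1} to obtain, for each $1\le i\le n/4$, an $\mathcal{F}_{i-1}$-measurable $p_i$ with $||deg_{B}(v_i)\mid \mathcal{F}_{i-1},Bin(d_2,p_i)||_{TV}=O(1/n)$. Going into the proof of that lemma, one sees that $p_i$ may be taken to be the Bernoulli parameter $\hat p_{(i-1)d_2+1}$ governing the first half-edge emanating from $v_i$, that is,
\[
p_i=\frac{\alpha n d_2-\sum_{j\le (i-1)d_2}B_j}{nd_2-(i-1)d_2}=\frac{\alpha nd_2-\sum_{j=1}^{i-1}deg_{B}(v_j)}{nd_2-(i-1)d_2},
\]
the $O(1/n)$ error arising exactly because the $d_2$ Bernoulli parameters controlling the half-edges of $v_i$ each differ from this one by $O(1/n)$. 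Since $\sum_{j=1}^{i-1}deg_{B}(v_j)=\sum_{j\le (i-1)d_2}B_j$ is $\mathcal{F}_{i-1}$-measurable, so is $p_i$.

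Next I would feed in the hypothesis $deg_{B}(v_j)\in\{\ell,\ell+1\}$ for all $j\le i$, with $\ell\le 2d_2\alpha$. Writing $S:=\sum_{j=1}^{i-1}deg_{B}(v_j)$, this gives $(i-1)\ell\le S\le (i-1)(\ell+1)$, and substituting the two extreme values into the formula above yields
\[
\frac{\alpha nd_2-(i-1)(\ell+1)}{nd_2-(i-1)d_2}\ \le\ p_i\ \le\ \frac{\alpha nd_2-(i-1)\ell}{nd_2-(i-1)d_2}.
\]
Now $i-1\le n/4$; moreover $\ell\le 2d_2\alpha$ together with $\alpha d_2>C>1$ (we are in the regime $C/d_2\le\alpha\le 1/2$) gives $\ell+1\le 3d_2\alpha$. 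Hence \eqref{lb1} applied with $j=i-1$ bounds the left-hand side below by $\alpha/4$, and \eqref{ub1} applied with $j=i-1$ (using $\alpha\le 1/2$) bounds the right-hand side above by $2/3$. Combining $\alpha/4\le p_i\le 2/3$ with the total variation estimate from the first step is exactly the assertion of Corollary~\ref{binomapprox}.

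I do not anticipate a genuine obstacle here. The most delicate point is the first step: extracting the explicit form of $p_i$ from the proof of Lemma~\ref{totalvar1} and confirming its $\mathcal{F}_{i-1}$-measurability, so that the already-established bounds \eqref{lb1}--\eqref{ub1} can be applied verbatim. Everything after that is routine bookkeeping.
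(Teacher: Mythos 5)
Your proposal is correct and follows exactly the route the paper intends: the paper's proof of Corollary \ref{binomapprox} simply says it is immediate from \eqref{lb1}, \eqref{ub1} and Lemma \ref{totalvar1}, and your argument fills in precisely those steps (identifying $p_i$ as the $\mathcal{F}_{i-1}$-measurable Bernoulli parameter from the proof of Lemma \ref{totalvar1} and bounding it via \eqref{lb1}--\eqref{ub1} using $\ell+1\le 3\alpha d_2$ in the regime $\alpha d_2>C>1$).
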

\begin{proof}The proof is immediate from \eqref{lb1}, \eqref{ub1} and Lemma \ref{totalvar1}. 
\end{proof} 
We now complete the proof of Theorem \ref{uniqueness} in the case $\alpha\in I_3=[\frac{C}{d_2},\frac{1}{2}].$
Using the same notation we used before we have:
\begin{eqnarray}
\nonumber
\mathbb{P}\left(\bigcup_{\alpha\in I_3}R_{\alpha}\mid \A\right) & \le & \sum_{\alpha\in I_3}\sum_{A,B}\mathbb{P}(deg_B(v_i)=g_i)\\
\nonumber
& \le &\sum_{\alpha\in I_3} \binom{n}{\alpha n}^2 \frac{1}{400^{n/4}}\\
\nonumber
& = &\sum_{\alpha\in I_3} \frac{1}{{\alpha n}}2^{2H(\alpha)n}\frac{1}{400^{n/4}}\\
& \leq & n\frac{2^{2n}}{400^{n/4}}. \label{fexp2}
\end{eqnarray}
The first inequality is by the union bound. To see the second inequality observe first that by Lemma \ref{constant1} it suffices to assume that $g_i's \in \{\ell,\ell+1\}$. Thus the second inequality follows by Corollary \ref{binomapprox} and Lemma \ref{supbound} as soon as $$\frac{\alpha}{4}\ge \frac{C_1}{d_2}$$ which we ensure by choosing $C\ge 4C_1.$  

Thus combining \eqref{pop}, \eqref{fexp1} and \eqref{fexp2} we have shown that 
$$\mathbb{P}(\cup R_{\alpha}) \le \tau^{n}$$ for some $\tau=\tau(d_2)<1.$  
Hence we are done. 
\qed

\section{Theorem \ref{str12} and connection to the min-bisection problem}\label{sec_weak}

Throughout this section we always assume $d_1$ is even.
We first remark that, under the hypothesis of Theorem \ref{str12}, one can make a quick and simple
connection to the min-bisection problem. It turns out that, in the
case of the RSBM, the two problems are equivalent. More precisesly, in the proof of Theorem \ref{str12} below, we show that the second eigenvalue of $\mathcal{G}(n,d_1,d_2)$ equals $d_1-d_2$ with high probability, which implies that $\gamma=\frac{2d_2}{d_1+d_2}$ where $\gamma$ is the spectral gap defined in \eqref{gapdef}. Hence, it follows by Theorem \ref{min_cut}, that the size of the min bisection of $\mathcal{G}$($n,d_1,d_2$) is at least $n d_2$. Since the true partition $(\A,\B)$ matches this lower bound, it solves the min-bisection problem.

We now proceed towards proving Theorem \ref{str12}. 
 Recall the notion of random lifts from Section \ref{rlmg}.
We will now connect $\cG(n,d_1,d_2)$ (RSBM)  with random lifts of a certain small graph. 
Consider the following multigraph on two vertices: $u$ and $v$, with $d_2$ edges between $u$ and $v$ and $d_1/2$ self loops at both the vertices (recall that $d_1$ is even).
\begin{figure}[hbt]
\includegraphics[scale=.8]{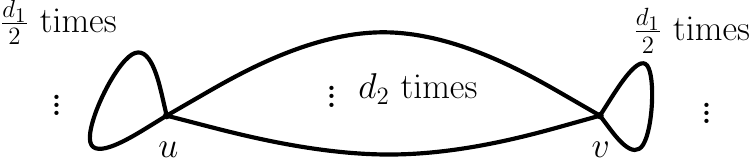}
\caption{Multigraph lifting to $\cG(n,d_1,d_2)$.}
\label{down1}
\end{figure}
To randomly $n-$lift the above graph according to Section \ref{sop2} we choose uniformly $d_1+d_2$  many permutations: 
\begin{equation}\label{genperm1}
\pi_1,\pi_2,\ldots \pi_{d_1},\pi'_1,\pi'_2,\ldots \pi'_{d_2}
\end{equation}
 from $S_n.$

Let the lift be  $\sG(n,d_1,d_2)$ on the vertex set $\{u,v\}\times \{1,2,\ldots n\}$. We naturally identify it with $[2n]=\{1,2,3,4,\ldots 2n\}$ with the first $n$ numbers corresponding to $u\times \{1,2,\ldots n \}$ and the rest corresponding to $v \times \{1,2,\ldots n\}.$

Note that $\sG_1,$ the subgraph induced by $\sG(n,d_1,d_2)$ on $[n]$ has edge set $(i,\pi_{j}(i))$ for $i\in [n]$ and $j\in [d_1/2]$. Similarly $\sG_2,$ on $[2n]\setminus [n]$ has edges $(n+i,n+\pi_{j}(i))$ for $i\in [n]$ and $j\in [d_1]\setminus [d_1/2]$.
The edges between $[n]$ and $[2n]\setminus [n]$ are the edges $(i,n+\pi'_{j}(i))$ for $i\in [n]$ and $j\in [d_2].$
Recall $\cG(n,d_1,d_2)$ from Definition \ref{model}.  A standard model to generate regular graphs is the well known configuration
model, as also used in this article (see Section \ref{pm1})
Now notice that $\sG(n,d_1,d_2)$ is essentially the same as
$\cG(n,d_1,d_2)$ except the graphs are now generated using
permutations in \eqref{genperm1}.  This is known as the Permutation
model (see \cite{F08} and the references therein). 
We now use a well known result which says that the two models are
contiguous, i.e. any event occurring a.a.s. in one of the models occurs
a.a.s. in the other one as well (see \cite{GJKW02}).
\footnote{\cite[Theorem $1.3$]{GJKW02} actually shows contiguity of regular graphs under configuration model and the permutation model. Note that $\cG(n,d_1,d_2)$ and $\sG(n,d_1,d_2)$ are  constructed  from three independent regular graphs constructed using the configuration model and the permutation model. Since contiguity is preserved under taking product of measures, $\cG(n,d_1,d_2)$ and $\sG(n,d_1,d_2)$ are contiguous.}

We now prove Theorem \ref{str12}. Let the graph in Figure \ref{down1} be called $\sC$. The adjacency matrix of $\sC$ is $A_*:=\left[\begin{array}{cc}d_1 & d_2\\
d_2 & d_1 
\end{array}
\right]$ with eigenvalues $d_1+d_2$ and $d_1-d_2$ and corresponding eigenvectors $\left(\begin{array}{c}1\\
1 
\end{array}
\right)$ and $\left(\begin{array}{c}1\\
-1 
\end{array}
\right)$,
respectively. Let $A_{*,n}$ be the adjacency matrix of $\sG(n,d_1,d_2)$ which as discussed above is a random $n-$lift of $\sC.$ From the discussion in Section \ref{sop2} we have the following: 
\begin{itemize}
\item $d_1+d_2$ and $d_1-d_2$  are eigenvalues of $A_{*,n}$, with eigenvectors $e$ and $\sigma$ respectively (see Definition \ref{notlabel}). 
\item By Theorem \ref{spectralnorm}, for any $\e>0$, a.a.s., all the other eigenvalues $\lambda$ of $A_{*,n}$ satisfy $|\lambda|\le 2\sqrt{d_1+d_2-1}+ \e.$
\end{itemize}
Let $A_{n}$ be the adjacency matrix of $\cG(n,d_1,d_2).$ That the
first fact above holds for $A_n$ as well is easy to check. Moreover,
using the contiguity of the two models, $A_{n}$ also has the second
property a.a.s..
Note that  finding the partition $(\mathcal{A},\mathcal{B}),$ in  Definition \ref{model} is equivalent to finding $\sigma,$ (the eigenvector corresponding to the eigenvalue $d_1-d_2$). Now under the hypothesis of Theorem \ref{str12}, by the above discussion we see that $d_1-d_2$ is the second eigenvalue which is also separated from the first and rest of the eigenvalues. 
Thus, we can efficiently compute a unitary eigenvector, $w$, associated to this eigenvalue. To assign the communities, put $v\in \mathcal{A}$ if and only if $w_v>0$. Strong recovery is then achieved. This proves Theorem \ref{str12}.

\section{Complete reconstruction from partial reconstruction: proof of Theorem \ref{majority}} \label{algorithm}

In this section we prove Theorem \ref{majority}. The idea is to show
that, because of the rigid nature of the graph, if we initialize the
partition with a large number of vertices labeled correctly, one can
bootstrap to deduce the true labels of even more vertices in the next
step. We do this by looking at the majority of a vertex'
neighbors. Recall \textbf{Majority}  from Section \ref{ma1}. We prove
that with high probability the graph $\mathcal{G}(n,d_1,d_2)$ is such
that if the input $(A,B)$ has a large overlap with the true partition
$(\A,\B)$, then one round of the algorithm reduces the number of
wrongly labeled vertices by a  constant factor. Thus it follows then
that, with high probability, after $O(\log(n))$ iterations, no further corrections can be made
and the algorithm outputs the true communities.

\begin{lemma}\label{maj_lem}  
Assume $d_1>d_2+4$ and let
$1/2<\lambda<1$. Then there exists an $\epsilon=\epsilon(d_1)>0$ such that, with probability
$1-O(n^{1/2-\lambda})$, the graph has the property that if  $(A,B)$ (the input)  satisfies $\min\{|A\cap \mathcal{A}|,|B\cap
\mathcal{B}|\}>(1-\epsilon)n$ and if $|\mathcal{A}\bigcap B|=:k$ and $|\mathcal{B} \bigcap A|=:k'$, then 
\[
|\mathcal{A}\bigcap B_1|\leq \lambda  k~\mbox{and}~
|\mathcal{B}\bigcap A_1|\leq \lambda k'~.
\] 
where $(A_1,B_1)$ is the output after one round of \textbf{Majority}.
\end{lemma}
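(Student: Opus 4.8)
The plan is to analyze a single round of \textbf{Majority} vertex-by-vertex, showing that a misclassified vertex $v \in \mathcal{A} \cap B$ has, with high probability, a strict majority of \emph{correctly labeled} neighbors, hence is corrected — unless $v$ is one of a small ``bad'' set whose size is bounded by a constant fraction of $k$. Fix $v \in \mathcal{A}$. Its neighborhood consists of $d_1$ vertices in $\mathcal{A}$ and $d_2$ vertices in $\mathcal{B}$. Given the input $(A,B)$ with overlap parameter $\epsilon$, all of $v$'s neighbors are correctly labeled except those lying in the ``wrong'' sets $\mathcal{A} \cap B$ (size $k$) or $\mathcal{B} \cap A$ (size $k'$). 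If $v$ has at most $m$ neighbors in $(\mathcal{A} \cap B) \cup (\mathcal{B} \cap A)$, then the signed sum $\sum_{v_j \sim v} \sigma_j$ is at least $(d_1 - d_2) - 2m$, so $v$ gets the correct label $+1$ as soon as $m < \frac{d_1 - d_2}{2}$. Since $d_1 > d_2 + 4$ gives $\frac{d_1-d_2}{2} > 2$, it suffices that $v$ has at most one neighbor in the wrong sets. So the only vertices of $\mathcal{A} \cap B$ that can \emph{fail} to be corrected are those with $\ge 2$ neighbors among the $\le k + k' \le 2\epsilon n$ wrongly-labeled vertices; call this set $\mathrm{Bad}_{\mathcal{A}}$. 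The goal reduces to: a.a.s. (in fact with probability $1 - O(n^{1/2-\lambda})$), $|\mathrm{Bad}_{\mathcal{A}}| \le \lambda k$ for \emph{every} admissible configuration of wrong-sets of sizes $k \le \epsilon n$, $k' \le \epsilon n$, and symmetrically for $\mathcal{B}$.

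The heart is a union-bound / enumeration estimate controlling, for random $\mathcal{G}(n,d_1,d_2)$, the number of vertices having $\ge 2$ neighbors inside a given set $S$ of size $s \le 2\epsilon n$. First I would work in the configuration/permutation model (legitimate by the contiguity and simple-graph arguments already set up in Sections \ref{pm1} and \ref{sec_weak}) so that the half-edges of the $d_2$-bipartite part and the two $d_1$-regular parts match uniformly. For a fixed set $S$ and a fixed set $T$ of $t$ vertices, the probability that every vertex of $T$ sends $\ge 2$ half-edges into $S$ is, up to constants, at most $\bigl(\binom{D}{2} \cdot (Ds/(Dn))^2\bigr)^{t} = \bigl(O(1)\,(s/n)^2\bigr)^{t}$ where $D = d_1 + d_2$, because each of the $t$ vertices must hit $S$ with two of its $\le D$ half-edges and the probability a given half-edge lands in $S$ is $\asymp s/n$. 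Taking a union bound over the $\binom{n}{s}$ choices of $S$, the $\binom{n}{t}$ choices of $T$, and then over $s,t$, one needs
\[
\binom{n}{s}\binom{n}{t}\Bigl(C\,(s/n)^2\Bigr)^{t} \le n^{s} \cdot \Bigl(\tfrac{en}{t}\Bigr)^{t} \Bigl(C s^2/n^2\Bigr)^{t}
\]
to be summably small whenever $t > \lambda s$ and $s \le \epsilon n$. Writing $s = \beta n$, the exponent is roughly $n\bigl[H(\beta) + \lambda\beta(\log(1/\beta) + \text{const}) - 2\lambda\beta\log(1/\beta)\bigr] = n\bigl[H(\beta) - \lambda\beta\log(1/\beta) + O(\beta)\bigr]$; since $H(\beta) \sim \beta\log(1/\beta)$ as $\beta \to 0$ and $\lambda > 1/2$, for $\beta$ below a threshold $\epsilon = \epsilon(d_1)$ (this is where $\epsilon$ is chosen, and it must beat the $\lambda$-dependent constants, which is why it depends on $d_1$) the bracket is negative, giving exponential decay — indeed enough to absorb the extra polynomial factors and land at $O(n^{1/2-\lambda})$ after summing over the $O(n^2)$ pairs $(s,t)$. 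The small-$s$ regime (say $s = O(1)$ or $s = o(n)$ not captured by the entropy bound) I would handle separately with a cruder bound, using that $\binom{n}{s}\binom{n}{t}(Cs^2/n^2)^t$ with $t \ge \lambda s \ge 2$ already carries a factor $n^{-2t + 2s + o(\cdot)}$-type surplus; when $t > \lambda s$ with $\lambda > 1/2$ this surplus is a negative power of $n$, yielding the claimed rate.

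The main obstacle is making the enumeration bound \emph{uniform over all pairs $(k,k')$ and all realizations of the wrong-sets} while retaining a polynomial rate $O(n^{1/2-\lambda})$ rather than merely a.a.s.\ — the naive a.a.s.\ statement is easy, but squeezing out the $n^{1/2-\lambda}$ requires that the exponential gain from the entropy inequality $H(\beta) < \lambda\beta\log(1/\beta)$ holds with enough room to dominate all the polynomial and combinatorial prefactors simultaneously, which in turn forces the precise (non-optimal) choice of $\epsilon(d_1)$. A secondary technical point is the lack of edge-independence: conditioning on $(A,B)$ and on the sizes $k,k'$ does not make the neighborhoods of distinct vertices independent, so the probability estimate for ``every vertex of $T$ has $\ge 2$ neighbors in $S$'' must be carried out by the half-edge exploration process, peeling off one matched half-edge at a time and bounding each conditional probability by $O(s/n)$ uniformly — valid as long as $s, t = o(n)$, which holds throughout since $s \le 2\epsilon n$ with $\epsilon$ small. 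Once this estimate is in hand, intersecting the two high-probability events (one for $\mathrm{Bad}_{\mathcal{A}}$, one for $\mathrm{Bad}_{\mathcal{B}}$) and noting $\{v \in \mathcal{A}\cap B : v \text{ not corrected}\} \subseteq \mathrm{Bad}_{\mathcal{A}}$ completes the proof.
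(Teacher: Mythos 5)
There is a genuine gap, and it occurs at the very first reduction. For $v\in\mathcal{A}$, a wrongly labeled neighbor in $\mathcal{B}\cap A$ carries the label $+1$ and therefore \emph{pushes $v$ toward the correct sign}; only the $k$ vertices of $\mathcal{A}\cap B$ hurt. By defining $\mathrm{Bad}_{\mathcal{A}}$ through neighbors in the union $(\mathcal{A}\cap B)\cup(\mathcal{B}\cap A)$ of size $s=k+k'$, you are asking for the statement ``$|\mathrm{Bad}_{\mathcal{A}}|\le\lambda k$ for every admissible configuration,'' which is simply false when $k\ll k'$: take $k=0$ and $k'=\epsilon n$; then you would need \emph{no} vertex of $\mathcal{A}$ to have two neighbors in a set of size $\epsilon n$, while the expected number of such vertices is of order $n\epsilon^2$. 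The containment $\{\text{failing vertices}\}\subseteq\mathrm{Bad}_{\mathcal{A}}$ is correct but far too lossy; the paper's proof avoids this by observing that a vertex of $\mathcal{A}$ can end up in $B_1$ only if it has at least \emph{three} neighbors in $\mathcal{A}\cap B$ itself (if it has at most two, the signed sum is at least $d_1-2-(d_2+2)>0$), so the bad event for the $\mathcal{A}$-side is measured against the set of size $k$, matching the target $\lambda k$.

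The second problem is the exponent arithmetic in your union bound, and it is not cosmetic. With per-vertex probability $\asymp(s/n)^2$ (threshold two) and $t\approx\lambda s$ bad vertices, the bound $\binom{n}{s}\binom{n}{t}\bigl(C s^2/n^2\bigr)^{t}$ has logarithm $\approx n\bigl[H(\beta)-\lambda\beta\log(1/\beta)+O(\beta)\bigr]$ with $s=\beta n$, exactly as you wrote; but since $H(\beta)\sim\beta\log(1/\beta)$ this bracket is $\approx(1-\lambda)\beta\log(1/\beta)>0$, so the bound \emph{diverges} — the condition $\lambda>1/2$ does not save it. The condition $\lambda>1/2$ only becomes effective when the per-vertex probability is cubic, i.e.\ when the failure threshold is three wrong same-side neighbors: then the bound is $\binom{n}{k}\binom{n}{\lambda k}\bigl(d_1k/(n-\lambda k)\bigr)^{3\lambda k}\le c^k(k/n)^{(2\lambda-1)k}$, whose exponent is negative precisely because $\lambda>1/2$. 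This is also exactly what the hypothesis $d_1>d_2+4$ buys (your own inequality $(d_1-d_2)/2>2$ permits two wrong neighbors, so failure forces three); by retreating to ``at most one wrong neighbor suffices'' you discard the factor that makes the counting work. Finally, the rate $O(n^{1/2-\lambda})$ in the paper comes from splitting the sum over $k$ at $\sqrt{n}$ (small $k$ gives $\sum_k(c n^{-(\lambda-1/2)})^k$, large $k$ gives a geometric series in $c\epsilon^{2\lambda-1}<1$, which is where $\epsilon(d_1)$ is chosen); your sketch never reaches a convergent bound from which this rate could be extracted. Your treatment of dependence via the half-edge exploration (stochastic domination by i.i.d.\ binomials) is fine and matches the paper; it is the combinatorial reduction and the threshold that need to be repaired.
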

The constant in $O(\cdot)$ depends on $d_1,\lambda, \epsilon.$

\begin{proof}
Let $v \in \mathcal{A}\bigcap B_1$ (that is, $v$ has the
wrong label after one iteration of \textbf{Majority}). We claim that $v$ has more than two neighbors in $\mathcal{A}\bigcap B$, otherwise $v$ will have at least $d_1-2$ neighbors in $\mathcal{A}\bigcap A$ and hence its label will be the sign of:
\[
\sum_{i\sim v} \sigma^1_i\geq d_1-2-(d_2+2)> 0~,
\] 

which contradicts the assumption that $v\in
\mathcal{A}\bigcap {B}_1$. 
Thus the occurrence of the event $\left | \mathcal{A}\bigcap {B}_1
\right | \geq \lambda k$ implies the occurrence of the event
\begin{center}
$E_k:=\{\exists \text{ a subset } S \subset \mathcal{A}, ~ |S|=\lambda k:  \mbox{ any } v\in S \mbox{ has at least three neighbors in}~ \mathcal{A}\bigcap B\}~.$
\end{center}

Hence an upper bound on the probability of the event $E_k$ will be an upper
bound on the failure probability for \textbf{Majority} to reduce
the size of the set of incorrectly labeled vertices in $\mathcal{A}$
by a fraction $1- \lambda$. 

We compute now an upper bound on the probability of $E_k$. 
By the exploration process (see Definition \ref{conf_process})  it follows that 
for vertices in the set $S$, the degree sequence $\{deg_{(\mathcal{A}\bigcap B)}(v)\}_{v \in S}$  is
stochastically bounded by a vector of i.i.d.\ binomial random
variables $\{Z_v\}_{v \in S}$, i.e.,
\[
\{deg_{(\mathcal{A}\bigcap B)}(v)\}_{v \in S} \preceq \{Z_v\}_{v \in S} ~, ~~\mbox{where}~ Z_v ~ {\sim}
~ Bin(d_1,\frac{k}{n-\lambda k})~.
\]
By stochastic domination of vectors we mean the existence of a coupling of the two distributions such that the one vector is pointwise at most the other vector. 
As $\P(Z_v\geq 3)\leq \left(\frac{d_1k}{n-\lambda k}\right)^3$, by
union bound and counting the number of choices for all the possible sets
$\mathcal{A}\bigcap B$ of size $k$ and $S$ of size $\lambda k$, we obtain
the following:
\[
\P(E_k)\leq \binom{n}{k}\binom{n}{\lambda k}\Big(\frac{d_1k}{n-\lambda k}\Big)^{3\lambda k}~.
\]

Adding over all possible $k$, we obtain
\begin{eqnarray} \label{bound1}
\P\left( \left | \mathcal{A}\bigcap {B}_1 \right | \geq \lambda
k~|~k\leq \epsilon n \right)& \leq & \sum_{k=1}^{\epsilon
  n}\binom{n}{k}\binom{n}{\lambda k}\Big(\frac{d_1k}{n-\lambda
  k}\Big)^{3\lambda k} \\
 &\leq & \sum_{k=1}^{\epsilon n} \Big(\frac{d_1^{3\lambda}e^{1+\lambda}}{\lambda^{\lambda}(1-\lambda)^{3\lambda}}\Big)^k\Big(\frac{k}{n}\Big)^{(2\lambda-1)k}
\end{eqnarray}
The last inequality follows by using the bound
$\binom{n}{m}\leq\Big(\frac{ne}{m}\Big)^m$, as well as the fact that
$n - \lambda n \leq n - \lambda k$. 
Denote now by $c= c(d_1) :=\frac{d_1^{3\lambda}e^{1+\lambda}}{\lambda^{\lambda}(1-\lambda)^{3\lambda}}$.

We show now that the sum in \eqref{bound1}   is $O(n^{1/2-\lambda})$. We split this sum into two parts,
$P_1$ and $P_2$, the first representing the sum of all the terms corresponding to indices up to $\lfloor \sqrt{n} \rfloor$, and the second part
representing the rest. For $P_1$, we obtain that 
\begin{eqnarray*}
P_1 & = & \sum_{k=1}^{\lfloor \sqrt{n} \rfloor} c^k
\Big(\frac{k}{n}\Big)^{(2\lambda-1)k}\leq \sum_{k=1}^{\lfloor
  \sqrt{n} \rfloor}
c^k n^{-(\lambda-1/2)k}\\
&  \leq & \sum_{k=1}^{\infty} \left (
  \frac{c}{n^{\lambda - 1/2}} \right)^k \\
& \leq & \frac{2c}{n^{\lambda-1/2}}~.
\end{eqnarray*}
The last inequality is true for large $n$. To bound $P_2$, we note that $k/n \leq \epsilon$ and we write: 
\[
P_2 = \sum_{k=\lceil \sqrt{n} \rceil}^{\epsilon n}
c^k\Big(\frac{k}{n}\Big)^{(2\lambda-1)k}\leq \sum_{k=\lceil \sqrt{n}
\rceil}^{\infty} (c\epsilon^{2\lambda-1})^k\leq
\frac{1}{1-c\epsilon^{2\lambda-1}}(c\epsilon^{2\lambda-1})^{\lceil
  \sqrt{n} \rceil}.
\] 
The last inequality above follows by choosing $\epsilon$ so that $c \epsilon^{2 \lambda-1}
<1$. 
Hence the probability of event $E_k$ is $O
\left(n^{1/2 -\lambda}\right).$
As the problem is symmetric in $\mathcal{A}$ and $\mathcal{B}$, it
follows that a similar bound can be found for the event that
$|\mathcal{B} \bigcap A_1| > \lambda k'$. Thus by union bound,
the probability of both events is also $O\left(n^{1/2 -\lambda}\right)$,
 and the proof of the lemma is complete.
\end{proof}

\subsection{Proof of Theorem \ref{majority}}\label{proof_majority}
Let $\epsilon=\epsilon(d_1)$ as in Lemma \ref{maj_lem}. Initialize \textbf{Majority} as $({A}_0,{B}_0)=(A,B)$ where $A, B$ satisfy the conditions of Lemma \ref{maj_lem}. Denote by $({A}_i,{B}_i)$ the partition after the $i^{th}$ iteration of \textbf{Majority} where $A_i$ corresponds to the vertices labeled $+1$, i.e., $(A_i,B_i)$ is the output of the algorithm when we initialize it with $({A}_{i-1},{B}_{i-1})$.
Consider the random variables
$
X_i=\max\{ |\mathcal{A}\bigcap {B}_i|;|\mathcal{B}\bigcap {A}_i|  \}.$
Note that $\{X_i=0\}$ iff $\mathcal{A}={A}_i$ (and thus
$\mathcal{B}={B}_i$). Also by the hypothesis $X_0 \le \e n,$ so Lemma \ref{maj_lem} implies that
\[
\P(X_i\leq \lambda^i k~,\forall~ 1 \leq i )\geq 1-O(n^{1/2-\lambda}).
\]

Let now $t=\left\lceil \frac{\log(\epsilon n)^{-1}}{\log{\lambda}}
\right\rceil$. 
 Since the $X_i$s  are integer-valued random variables, we have
\[
\P(X_t=0)\geq 1-O(n^{1/2-\lambda})~,
\] 
which proves the theorem.
\qed


\noindent {\bf Acknowledgments} 
We thank Charles Bordenave for pointing out to us the connection between random lifts and the RSBM. ID acknowledges support from NSF grant DMS-08-47661. CH acknowledges support from NSF grant DMS-1308645 and NSA grant H98230-13-1-0827. GB and SG were partially supported by NSF grant DMS-08-47661 and NSF grant DMS-1308645.

\nocite{}
\bibliography{rsbm}{}

\begin{thebibliography}{10}

\bibitem{abbe2014exact}
Emmanuel Abbe, Afonso~S Bandeira, and Georgina Hall.
\newblock Exact recovery in the stochastic block model.
\newblock {\em arXiv preprint arXiv:1405.3267}, 2014.

\bibitem{Linial}
Alon Amit, Nathan Linial, Ji{\v{r}}{\'\i} Matou{\v{s}}ek, and Eyal Rozenman.
\newblock Random lifts of graphs.
\newblock In {\em Proceedings of the twelfth annual ACM-SIAM symposium on
  Discrete algorithms}, pages 883--894. Society for Industrial and Applied
  Mathematics, 2001.

\bibitem{bendercanfield}
E.A. Bender and E.R. Canfield.
\newblock The asymptotic number of non-negative integer matrices with given row
  and column sums.
\newblock {\em J. Combinatorial Theory}, 24:296--307, 1978.

\bibitem{bollobas}
B{\'e}la Bollob{\'a}s.
\newblock {\em Random Graphs, volume 73 of Cambridge studies in advanced
  mathematics}.
\newblock Cambridge University Press, Cambridge, 2001.

\bibitem{B15}
Charles Bordenave.
\newblock A new proof of friedman's second eigenvalue theorem and its extension
  to random lifts.
\newblock {\em arXiv preprint arXiv:1502.04482}, 2015.

\bibitem{coja2010graph}
Amin Coja-Oghlan.
\newblock Graph partitioning via adaptive spectral techniques.
\newblock {\em Combinatorics, Probability and Computing}, 19(02):227--284,
  2010.

\bibitem{condon2001algorithms}
Anne Condon and Richard~M Karp.
\newblock Algorithms for graph partitioning on the planted partition model.
\newblock {\em Random Structures and Algorithms}, 18(2):116--140, 2001.

\bibitem{decelle2011}
Aurelien Decelle, Florent Krzakala, Cristopher Moore, and Lenka Zdeborov{\'a}.
\newblock Asymptotic analysis of the stochastic block model for modular
  networks and its algorithmic applications.
\newblock {\em Physical Review E}, 84(6):066106, 2011.

\bibitem{dyer1989solution}
Martin~E. Dyer and Alan~M. Frieze.
\newblock The solution of some random np-hard problems in polynomial expected
  time.
\newblock {\em Journal of Algorithms}, 10(4):451--489, 1989.

\bibitem{F08}
Joel Friedman.
\newblock {\em A proof of Alon's second eigenvalue conjecture and related
  problems}.
\newblock American Mathematical Soc., 2008.

\bibitem{FK14}
Joel Friedman and David-Emmanuel Kohler.
\newblock The relativized second eigenvalue conjecture of alon.
\newblock {\em arXiv preprint arXiv:1403.3462}, 2014.

\bibitem{GJS76}
Michael~R Garey, David~S. Johnson, and Larry Stockmeyer.
\newblock Some simplified np-complete graph problems.
\newblock {\em Theoretical computer science}, 1(3):237--267, 1976.

\bibitem{GJKW02}
Catherine Greenhill, Svante Janson, Jeong~Han Kim, and Nicholas~C Wormald.
\newblock Permutation pseudographs and contiguity.
\newblock {\em Combinatorics, Probability \& Computing}, 11(03):273--298, 2002.

\bibitem{HJ12}
Roger~A Horn and Charles~R Johnson.
\newblock {\em Matrix analysis}.
\newblock Cambridge university press, 2012.

\bibitem{JS89}
Mark Jerrum and Alistair Sinclair.
\newblock Approximating the permanent.
\newblock {\em SIAM journal on computing}, 18(6):1149--1178, 1989.

\bibitem{jerrum1998metropolis}
Mark Jerrum and Gregory~B Sorkin.
\newblock The metropolis algorithm for graph bisection.
\newblock {\em Discrete Applied Mathematics}, 82(1):155--175, 1998.

\bibitem{LS88}
Gregory~F Lawler and Alan~D Sokal.
\newblock Bounds on the $l^2$ spectrum for markov chains and markov processes:
  a generalization of cheeger’s inequality.
\newblock {\em Transactions of the American mathematical society},
  309(2):557--580, 1988.

\bibitem{DLLM08}
Jure Leskovec, Kevin~J Lang, Anirban Dasgupta, and Michael~W Mahoney.
\newblock Statistical properties of community structure in large social and
  information networks.
\newblock In {\em Proceedings of the 17th international conference on World
  Wide Web}, pages 695--704. ACM, 2008.

\bibitem{LPW}
David~Asher Levin, Yuval Peres, and Elizabeth~Lee Wilmer.
\newblock {\em Markov chains and mixing times}.
\newblock American Mathematical Soc., 2009.

\bibitem{LS10}
Eyal Lubetzky, Allan Sly, et~al.
\newblock Cutoff phenomena for random walks on random regular graphs.
\newblock {\em Duke Mathematical Journal}, 153(3):475--510, 2010.

\bibitem{M13}
Laurent Massouli{\'e}.
\newblock Community detection thresholds and the weak ramanujan property.
\newblock {\em arXiv preprint arXiv:1311.3085}, 2013.

\bibitem{MWa03}
Brendan~D McKay and Xiaoji Wang.
\newblock Asymptotic enumeration of 0--1 matrices with equal row sums and equal
  column sums.
\newblock {\em Linear algebra and its applications}, 373:273--287, 2003.

\bibitem{mckay1991asymptotic}
Brendan~D McKay and Nicholas~C Wormald.
\newblock Asymptotic enumeration by degree sequence of graphs with degrees
  o($n^{1/2}$).
\newblock {\em Combinatorica}, 11(4):369--382, 1991.

\bibitem{MWW04}
Brendan~D McKay, Nicholas~C Wormald, and Beata Wysocka.
\newblock Short cycles in random regular graphs.
\newblock {\em Electron. J. Combin}, 11(1), 2004.

\bibitem{mcsherry2001spectral}
Frank McSherry.
\newblock Spectral partitioning of random graphs.
\newblock In {\em Foundations of Computer Science, 2001. Proceedings. 42nd IEEE
  Symposium on}, pages 529--537. IEEE, 2001.

\bibitem{MNS12}
Elchanan Mossel, Joe Neeman, and Allan Sly.
\newblock Stochastic block models and reconstruction.
\newblock {\em arXiv preprint arXiv:1202.1499}, 2012.

\bibitem{MNS13}
Elchanan Mossel, Joe Neeman, and Allan Sly.
\newblock A proof of the block model threshold conjecture.
\newblock {\em arXiv preprint arXiv:1311.4115}, 2013.

\bibitem{MNS14}
Elchanan Mossel, Joe Neeman, and Allan Sly.
\newblock Consistency thresholds for binary symmetric block models.
\newblock {\em arXiv preprint arXiv:1407.1591}, 2014.

\end{thebibliography}
\bibliographystyle{plain}

\end{document}